\numberwithin{equation}{section}
\def\bq{\begin{equation}}
\def\eq{\end{equation}}
\def\ga{\gamma}
\def\ep{\epsilon}
\def\pa{\partial}
\def\sgn{\mathrm{sgn}}
\def\R{{\mathbb R}}
\def\sA{\mathscr A}
\def\BB{{\mathcal B}}
\def\KK{{\mathcal K}}
\def\CC{{\mathcal C}}
\def\HH{{\mathcal H}}
\def\LL{{\mathcal L}}
\def\GG{{\mathcal G}}
\def\WW{{\mathcal W}}
\def\SS{{\mathcal S}}
\def\SS{{\mathcal S}}
\def\KK{{\mathcal K}}
\def\RR{{\mathcal R}}
\def\VV{{\mathcal V}}
\def\Z{{\mathbb Z}}
\def\R{\mathbb{R}}
\definecolor{bluegreen}{rgb}{0.0, 0.3, 0.9}
\newtheorem{theorem}{Theorem}[section]
\newtheorem{lemma}[theorem]{Lemma}
\newtheorem{corollary}[theorem]{Corollary}
\newtheorem{remark}[theorem]{Remark}
\title{Amplitude bounds of steady rotational water waves}
\author{Susanna V.~Haziot}
\address{Department of Mathematics, Brown University, Providence, RI 02912, USA}
\email{susanna\_haziot@brown.edu}
\author{Walter A. Strauss}
\address{Department of Mathematics, Brown University, Providence, RI 02912, USA}
\email{walter\_strauss@brown.edu}
\begin{document}
\maketitle

\begin{abstract}
{We consider classical steady water waves with a free surface, a flat bottom and  constant 
vorticity $\ga$. .  
In the adverse case $\ga>0$ we prove that there is an absolute upper bound on the amplitude, independent of the physical constants, provided that $\ga$ is sufficiently small.  
In any favorable case $\ga\le0$ we present a new proof of such an absolute bound on the amplitude and prove that the amplitude tends to zero as $\ga$ tends to $-\infty$.  
}  
\end{abstract}


\section{Introduction}
\subsection{Main results} 

Traveling water waves are solutions of permanent form to the incompressible Euler equations which propagate at constant speed $c>0$ in a given direction, under the restoring force of gravity. They can be studied in a frame of reference that moves at the same speed $c$ as the wave, in which they will appear to be stationary, or \textit{steady}. The surface $\mathcal{S}$ of the wave is itself an unknown making it a free-boundary problem. The  waves, assumed to be constant in one horizontal direction, propagate through a fluid region $\Omega$ in the $(X,Y)$-plane that is bounded below by a flat bottom $\BB=\{Y=0\}$ and above by the free surface $\SS$. 
In this paper we consider waves that are $2\pi$-periodic in the remaining horizontal direction and have a constant vorticity $\ga$.  We assume that the parameterized surface $\SS=\{(\xi(s),\eta(s)):s\in\R\}$ lies strictly above $\BB$.  
Our goal is to find upper bounds on the amplitudes $\sA$ of the waves.  

Let $\Omega$ be the fluid domain in the moving frame.  The problem can be expressed in terms of the stream function $\Psi(X,Y)$, a function whose level curves coincide with the particle trajectories of the flow and which satisfies the Poisson equation together with suitable conditions on the boundary, as follows. 
\begin{equation}\label{stream}
\begin{aligned}
\Delta \Psi&=-\gamma&\quad&\text{in }\Omega,\\
\Psi&=0&\quad&\text{on }\mathcal{S},\\
\Psi&=-m&\quad&\text{on } \BB,\\
|\nabla\Psi|^2+2g\eta &=Q&\quad&\text{on }\mathcal{S}.
\end{aligned}
\end{equation}
Here we denote by $g>0$ the gravitational constant, $m\in\R$ the mass flux which we will choose to be negative in this paper,  
and $Q>0$ the Bernoulli constant (the energy) on the surface.  The \textit{vorticity}, or local swirling motion of the particles in the fluid, appears as the parameter $\gamma$.      
The velocity vector in the moving frame is $(u,v)=(\Psi_Y,-\Psi_X)$, in terms of which the vorticity is 
$\gamma=v_X-u_Y$ and  the mass flux $m$ is the integral of $u$ over any vertical slice.    
The physical velocity in the non-moving frame is $U=u+c$.  
It is obvious from \eqref{stream} that there is the trivial upper bound $\sA \le \frac Q{2g}$.  
Our goal is to find  much sharper bounds.  

This problem has been studied for centuries.  We begin here by stating a theorem of \cite {CSV1} 
that asserts the existence of a global curve of steady periodic water waves by means of bifurcation theory.   We state it somewhat informally.  

\begin{theorem}\cite{CSV1}
Fix the gravitational constant $g>0$, the conformal depth $d$ of the fluid and the vorticity 
	$\gamma\in\mathbb{R}$. There exists a continuous curve 
	$\mathcal{C} = \{(\Psi(s),m(s),Q(s)):\ 0\le s<\infty\}$    
of symmetric, $2\pi$-periodic solutions in 
	$C^{2,\alpha}(\mathbb{R})\times C^{2,\alpha}(\mathbb{R})$ of the system  \eqref{stream} for which \eqref{no_stag} and \eqref{nodal} are satisfied 
	with $m(s)<0$ and $Q(s)>0$.  
	This curve begins at the laminar flow and ``ends" with one of the following alternatives: 
	\begin{enumerate}
		\item either $\|\Psi_s\|_{ C^{2,\alpha}(\R)\times C^{2,\alpha}(\R)}  \to \infty$  as $s\to\infty$ 
		\item or $|m(s)|+Q(s) \to \infty$  as $s\to\infty$
		\item or $\min_{x\in\R}\{Q(s)-2g\eta_s(x) \} \to0$  as $s\to\infty$
		\item or there exists some $s^*\in(0,\infty)$ such that the wave profile self-intersects strictly above the trough.
	\end{enumerate}
\end{theorem}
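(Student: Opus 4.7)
My plan is to follow the now-standard route for global bifurcation in water waves with constant vorticity. I would first reformulate the free-boundary system \eqref{stream} as a nonlinear equation on a fixed domain by means of a conformal map from the flat strip $R_d:=\R\times(-d,0)$, of conformal depth $d$, onto the a priori unknown fluid region $\Omega$. Since $\ga$ is constant, a fixed particular solution of $\Delta\Psi=-\ga$ (quadratic in the vertical coordinate) absorbs the vorticity and leaves a harmonic remainder, whose boundary values on the top of the strip are tied to the surface parametrization $(\xi,\eta)$ via the $2\pi$-periodic Hilbert transform $\HH_d$ on the strip. In this way \eqref{stream}, together with \eqref{no_stag} and \eqref{nodal}, is recast as a real-analytic equation $\FF(w,m,Q)=0$ for a single $2\pi$-periodic, even unknown $w$ (encoding the surface trace), with $\FF$ Fredholm of index zero between appropriate H\"older spaces.

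Next I would identify the one-parameter family of laminar (flat) solutions. Computing the linearization $D_w\FF$ at such a flow and diagonalizing with respect to Fourier modes, one finds a dispersion-type relation in $(m,Q,\ga,g,d)$ whose first solution makes the kernel one-dimensional and spanned by $\cos s$. An explicit derivative computation yields the transversality condition, so that the Crandall--Rabinowitz theorem produces a local real-analytic branch of $2\pi$-periodic, symmetric, nontrivial solutions bifurcating from the laminar flow at that point.

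To continue this branch globally, I would apply the analytic global bifurcation theorem of Dancer in the sharper Buffoni--Toland form. This produces a continuous curve $\mathcal C$ along which exactly one of the following occurs: either $\mathcal C$ closes up in a loop, or it leaves every compact subset of its domain of definition. The loop alternative is ruled out using the nodal structure \eqref{nodal}, which is preserved along $\mathcal C$ by a maximum-principle and continuation argument and which forces the only admissible laminar endpoint to be the bifurcation point itself. The noncompactness alternative then translates into the four listed possibilities: blow-up of $\|\Psi_s\|_{C^{2,\a}}$ (alternative (1)); blow-up of $|m|+Q$ (alternative (2)); breakdown of \eqref{no_stag}, equivalent to $\min_x\{Q-2g\eta_s\}\to 0$ (alternative (3)); or loss of injectivity of the surface parametrization, giving a self-intersection strictly above the trough (alternative (4)). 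Schauder estimates in conformal coordinates ensure precompactness of bounded subfamilies whenever none of these four is triggered.

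The hardest step is this last one: proving that, absent all four alternatives, uniform $C^{2,\a}$ bounds on $\Psi$ and the conformal map can actually be extracted. This requires quantitative control of the conformal change of coordinates purely in terms of a lower bound for $Q-2g\eta_s$, an upper bound for $|m|+Q$, and the embeddedness of $\SS$. A secondary but delicate point is verifying that the nodal properties in \eqref{nodal} persist along the entire branch; this rests on the strong maximum principle and Hopf lemma applied in the conformal strip, and on showing that the first place where any nodal inequality becomes an equality already coincides with the onset of one of (1)--(4).
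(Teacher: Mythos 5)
This statement is not proved in the paper at all: it is quoted (explicitly ``somewhat informally'') from the reference \cite{CSV1}, so there is no in-paper argument to compare your proposal against. That said, your outline does track the actual strategy of \cite{CSV1}: a conformal change of variables to the strip of depth $d$ (rather than the Dubreil--Jacotin transform, which would forbid the critical layers that \cite{CSV1} allows), absorption of the constant vorticity by an explicit quadratic particular solution, reduction to a Babenko-type nonlocal scalar equation on the surface via the periodic Hilbert transform $\HH$, local bifurcation from the laminar family by Crandall--Rabinowitz, and global continuation whose noncompactness alternative is then translated into the four listed possibilities.

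Two caveats. First, what you have written is a road map, not a proof: you yourself flag the two genuinely hard steps --- (a) extracting uniform $C^{2,\a}$ bounds and compactness when none of the four alternatives occurs, which requires quantitative control of the conformal map solely from a lower bound on $Q-2g\eta$, an upper bound on $|m|+Q$, and embeddedness of $\SS$; and (b) propagating the nodal property \eqref{nodal} along the whole curve, which is delicate precisely because interior stagnation points and critical layers are permitted, so the maximum-principle argument must be run on the right auxiliary function in the conformal strip rather than on $\Psi_Y$ itself. These are the substance of \cite{CSV1} and are asserted rather than carried out in your sketch. Second, the loop alternative is not ruled out in general in \cite{CSV1}; the curve is obtained so that the listed alternatives exhaust what can happen, and the precise bookkeeping of how a closed loop or a return to the trivial family is excluded (or folded into the alternatives) deserves more care than the one sentence you give it. As a sketch of the cited theorem your proposal is faithful; as a proof it is incomplete in exactly the places you identify.
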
 


We present two theorems.  The first one is a proof of an upper bound on the amplitude in 
the {\it favorable} case $\ga\le 0$ .  The upper bound is similar but not identical to the 
one in \cite{CSV2} but the most important difference is that the proof is 
much simpler than the one in \cite{CSV2}.  
\begin{theorem}   \label{thm:favorable}
Consider any water wave that belongs to the bifurcation curve $\mathcal C$ 
in the favorable case $\gamma\le0$.  
Then the wave amplitude $\sA$ (the elevation difference between the crest and trough) 
satisfies  
	$\sA    <  \min\left\{2d , \frac1{|\ga|} \sqrt{12gd} \right \}$.  
Thus the amplitude  is uniformly bounded and it tends to zero as $\ga \to -\infty$.  
\end{theorem}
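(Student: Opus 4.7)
The plan is to combine the Bernoulli identity on $\SS$ with two separate maximum-principle arguments that exploit the favorable sign $\ga\le 0$.

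First I would extract the key inequality from Bernoulli.  Evaluating $|\nabla\Psi|^2+2g\eta=Q$ at the crest $(X_c,\eta_{\max})$ and at the trough $(X_t,\eta_{\min})$, and using that symmetry of the wave forces $\Psi_X=0$ at these two points, subtraction yields
\[
\Psi_Y(X_t,\eta_{\min})^2 \;=\; \Psi_Y(X_c,\eta_{\max})^2+2g\sA \;\ge\; 2g\sA,
\]
so any upper bound on $|\Psi_Y(X_t,\eta_{\min})|$ produces an upper bound on $\sA$.

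For the geometric bound $\sA<2d$, I would pass to conformal coordinates $(x,y)\in(\R/2\pi\Z)\times(-d,0)$ and write $V(x,y)$ for the $Y$-component of the conformal map from the strip to $\Om$, so that $V$ is harmonic with $V(x,-d)=0$ and $V(x,0)=\eta(s(x))$.  Since the physical horizontal period equals $2\pi$, a Cauchy--Riemann computation (comparing the $y$-independent Fourier mode of $V_y=U_x$ with the total increase of $U$ over one $x$-period) identifies $d=(2\pi)^{-1}\int_0^{2\pi}V(x,0)\,dx$; in particular $\eta_{\min}\le d$.  Positivity of $V$ in the strip interior, combined with a Schwarz reflection of $V$ across $\{y=-d\}$ and the maximum principle applied on the resulting strip of height $2d$, then yields $\sA<2d$.

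For the vorticity bound I would work along the vertical segment $I=\{X_t\}\times[0,\eta_{\min}]$ through the trough.  Symmetry of the wave gives $\Psi_X\equiv 0$ on $I$, so the equation $\Delta\Psi=-\ga$ reduces there to
\[
\Psi_{YY}(X_t,Y)\;=\;-\ga-\Psi_{XX}(X_t,Y),
\]
and the nodal structure of $\mathcal C$ forces $\Psi_{XX}(X_t,Y)\ge 0$ on $I$ (as $X_t$ is a local minimum in $X$ of $\Psi(\cdot,Y)$ for each $Y\in[0,\eta_{\min}]$).  Multiplying this equation by a suitable quadratic weight and integrating over $I$, using the Dirichlet data $\Psi(X_t,0)=-m$ and $\Psi(X_t,\eta_{\min})=0$ together with the elementary identity $\int_0^{\ell}Y(\ell-Y)\,dY=\ell^3/6$, one extracts an upper bound of the form $|\Psi_Y(X_t,\eta_{\min})|^2\le\tfrac16\,\ga^2\,\eta_{\min}^2$.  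Inserting the lower bound from Step 1 and the inequality $\eta_{\min}\le d$ from Step 2 produces $\sA\,|\ga|<\sqrt{12gd}$ (with $\sqrt{12}=\sqrt{2\cdot 6}$ coming from combining Bernoulli's $2g\sA$ with the Poincar\'e-type constant $1/6$), and the limit $\sA\to 0$ as $\ga\to-\infty$ is then immediate.

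\emph{Main obstacle.}  The delicate step is the integration-by-parts on $I$ that produces the estimate $|\Psi_Y|^2\le\tfrac16\ga^2\eta_{\min}^2$ with the correct sign of the boundary terms.  The key analytic input is the pointwise sign $\Psi_{XX}(X_t,\cdot)\ge 0$ on $I$, itself a consequence of the nodal structure of $\mathcal C$; choosing the right weight so that the sharp constant $\tfrac16$ (and hence $\sqrt{12}$) emerges from the identity $\int_0^\ell Y(\ell-Y)\,dY=\ell^3/6$ is the heart of the argument.
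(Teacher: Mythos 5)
Your proposal takes a genuinely different route (local, physical-plane estimates at the crest and trough) from the paper, which works entirely with the nonlocal conformal reformulation: the paper evaluates the Babenko-type identity \eqref{f_diff} between crest and trough, bounds the left side above by $\sA$ using the sign $\psi_y\le 0$ (this is the only place $\gamma\le 0$ enters), and bounds the right side below by $\frac1{2d}\sA^2+\frac{\gamma^2}{12gd}\sA^3$ by writing the quadratic and cubic terms as $\GG B^\pi(\pi)+\GG B^0(0)$ and $\GG S^\pi(\pi)+\GG S^0(0)$ for auxiliary functions maximized at $\pi$ and $0$, and then invoking the Hopf lemma for the Dirichlet--Neumann operator $\GG$. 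Unfortunately your version has two fatal gaps. First, the bound $\sA<2d$ is not a consequence of positivity of $V$ plus reflection and the maximum principle: a positive harmonic function in the strip, vanishing on $y=-d$ and with mean $d$ on $y=0$, can have arbitrarily large oscillation on $y=0$ (take boundary data equal to a tall narrow bump of mean $d$ and solve the Dirichlet problem). The inequality $\eta_{\min}\le d$ does follow from $[\eta_0]=d$, but $\eta_{\max}\le 2d$ does not; in the paper the $2d$ bound comes from the dynamic boundary condition (the identity $\LL f=\RR f$ combined with $\VV>\frac1{2d}\sA^2$), not from geometry alone.

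Second, the key estimate of your Step 3, $|\Psi_Y(X_t,\eta_{\min})|^2\le\frac16\gamma^2\eta_{\min}^2$, cannot be correct: at $\gamma=0$ it would force $\Psi_Y=0$ at the trough, i.e.\ a surface stagnation point, contradicting \eqref{no_stag} (irrotational Stokes waves certainly have $\Psi_Y\neq0$ there). What the weighted integration over $I$ actually produces are identities involving the flux $m$ through the boundary data $\Psi(X_t,0)=-m$, and these terms cannot be discarded; the sign information $\Psi_{YY}\le-\gamma$ gives one-sided control of $\Psi_Y$ relative to its (unknown) value at the bottom, not an absolute bound. Moreover, even granting your estimate, combining it with Step 1 yields $2g\sA\le\frac16\gamma^2 d^2$, i.e.\ $\sA\lesssim \gamma^2d^2/g$, which is not the claimed $\sA<\sqrt{12gd}/|\gamma|$ (the two differ in their dependence on $\gamma$, and your chain does not algebraically imply the theorem's bound). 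The structural reason the paper's $1/|\gamma|$ decay appears is that the cubic term $\frac{\gamma^2}{12gd}\sA^3$ sits on the small side of an inequality whose large side is linear in $\sA$; your argument has no analogue of this cubic-versus-linear comparison.
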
  

Theorem \ref{thm:favorable} was proven in \cite{CSV2} with the different upper bound 
$2\pi/\beta(\frac\pi 2)$ by means of detailed quantitative estimates on the kernel.  
The function $\beta$ is the kernel of the Hilbert transform.  Note that in our theorem, 
for the shallow water case ($d$ small) the amplitude is small, which was not observed in \cite{CSV2}.
(We warn the reader that in \cite{CSV2} the vorticity $\gamma$ is denoted as $-\Upsilon$.)  

What we do here is to provide a very different, more qualitative, proof of a somewhat different upper bound.  Our proof is based on the direct use of the Hopf maximum principle, which we suspect may potentially turn out to be a more robust method applicable to a variety of other problems.  Our proof is given in Section \ref{sect:favorable}.  \bigskip 

Let us now turn to the main result of this paper.  It asserts an absolute upper bound on the amplitude in the {\it adverse} (unfavorable) case $\gamma>0$ provided $\gamma$ is sufficiently small.    

\begin{theorem}   \label{thm:adverse}  
	Consider a smooth water wave that belongs to the bifurcation curve $\mathcal C$ 
	in the adverse case $\gamma>0$ and assume that \textit{either} the slope $|\eta'|$ 
	or the convexity $|\eta''|$ of the wave is bounded.  
	Then the wave amplitude $\sA$ (the elevation difference between the crest and trough) 
	is uniformly bounded by a certain constant provided $\ga$ is sufficiently small.  
	The upper bound depends only on a certain explicit function of 
	the constants $g, Q, m$ and the conformal depth $d$.  
	
	As an example, the upper bound can be chosen to be the completely explicit number, ${12\pi}/ {\beta(\frac\pi 2)} < 104$, where $\beta$ is the kernel of the Hilbert transform, if we assume that each of the  quantities $\gamma, Q\gamma, |m|\gamma^2$, as well as either $N\ga^2$ or  $M\ga^4$, 
are less than certain explicit functions of $g$ and $d$.  These explicit functions are specified in either \eqref{N1}-\eqref{N2} or \eqref{ineqgamma1}-\eqref{ineqgamma2} of Section~\ref{sect:adverse}.    
Here $N:=\sup |\eta'|$ measures the maximum slope of $\SS$ and  $M:=|\eta''|$  measures its `convexity'.  This upper bound is certainly far from optimal.  
\end{theorem}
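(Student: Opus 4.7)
The plan is to transplant the conformal/Hilbert-kernel amplitude argument of \cite{CSV2} from the favorable regime to the adverse one, treating every $\gamma$-dependent contribution as a perturbation of the irrotational baseline. The first step is to conformally map the fluid domain $\Omega$ onto the strip $\{-d<Y<0\}$, sending $\mathcal S$ to $\{Y=0\}$ and $\mathcal B$ to $\{Y=-d\}$. Under this map, the profile $\eta$ and its conformal conjugate $\xi-s$ are linked by the Hilbert transform of the strip, whose periodic kernel is $\beta$. The Bernoulli condition $|\nabla\Psi|^2+2g\eta=Q$ on $\mathcal S$ then becomes a single nonlocal identity for $\eta$ whose source splits into an irrotational part, depending only on $g,Q,m,d$, plus an explicit $\gamma$-correction that measures how $\gamma\Psi$ inside $\Omega$ contributes to $|\nabla\Psi|^2$ on the surface.

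I would next invoke the kernel identity at the heart of \cite{CSV2}, which expresses $\eta(s_{\max})-\eta(s_{\min})$ as an integral of a positive quantity against $\beta$ and yields, in the irrotational case, $\sA\,\beta(\pi/2)<2\pi$. Running the same identity in the presence of the vorticity source produces the $2\pi$ baseline together with a finite list of error terms, each one a product of one of $\gamma$, $Q\gamma$, $|m|\gamma^2$, or $N\gamma^2$ with an explicit function of $g$ and $d$. Choosing the thresholds of \eqref{N1}-\eqref{N2} so that each such product is smaller than $2\pi$, the sum of the baseline with five perturbations is bounded by $12\pi$, giving
\[ \sA\,\beta(\tfrac\pi2)\;<\;12\pi, \qquad\text{hence}\qquad \sA\;<\;\frac{12\pi}{\beta(\pi/2)}<104, \]
as announced. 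The convexity alternative is handled by integrating the nonlocal perturbation by parts once, trading $\sup|\eta'|$ for $\gamma\sup|\eta''|$ at the cost of one extra power of $\gamma$, which accounts for the $M\gamma^4$ threshold of \eqref{ineqgamma1}-\eqref{ineqgamma2}.

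The main obstacle is precisely that the favorable-case arguments — whether via the Hopf maximum principle used in Theorem~\ref{thm:favorable}, or the kernel monotonicity exploited in \cite{CSV2} — rely on a sign for the $\gamma$-term that is unavailable here. Each vorticity-induced error must therefore be estimated quantitatively and uniformly in the wave profile, and the regularity hypothesis that either $N$ or $M$ is finite is exactly what prevents the unsigned perturbation from dominating the nonlocal inversion. The technical heart of the proof is the careful calibration of the thresholds so that the cumulative perturbation is absorbed into a mere sixfold multiplicative increase of the CSV2 constant, rather than degrading the bound entirely.
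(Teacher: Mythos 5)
Your toolkit is the right one --- conformal reformulation, the periodic kernel $\beta$, the CSV2 quadratic lower bound $\{f\mathcal{H}f'+\mathcal{H}(ff')\}\big|_0^\pi\geq \frac{\beta(\pi/2)}{2\pi}\sA^2$, and the use of a slope or convexity bound to tame the $O(1/s^2)$ singularity of $\beta'$ at $s=0$ in the cubic term $\mathcal{K}f$. But there is a genuine gap at the decisive step. The identity \eqref{f_diff}, combined with these estimates, does \emph{not} yield a one-sided bound of the form $\sA\,\beta(\pi/2)<12\pi$ by summing a baseline with absorbable perturbations. It yields (after dividing by the leading coefficient) a polynomial inequality $\sA^2+(D-E)\sA+F\geq 0$ with $D,E,F>0$ and $E,F=O(\gamma^{-2})$; the cubic term $-\frac{\gamma^2}{2g}\mathcal{K}f$ contributes the $+\sA^2$ with a sign that makes the inequality \emph{easier} to satisfy at large amplitude. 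Such an inequality is satisfied both below the smaller root and above the larger root of the quadratic, so by itself it excludes nothing. The missing idea is the continuation argument: the curve $\mathcal{C}$ is connected and contains the laminar flow with $\sA=0$, the amplitude varies continuously along it, and therefore it can never cross the forbidden interval between the two roots (which is nonempty exactly when $E>D$ and $4F<(E-D)^2$, i.e.\ when $\gamma$ is small). The explicit number $12\pi/\beta(\pi/2)$ then comes from a Taylor expansion of the smaller root $\frac12(E-D)-\frac12\sqrt{(E-D)^2-4F}\approx F/(E-D)$, not from adding six copies of $2\pi$. Your term-by-term absorption scheme, as written, would not produce any upper bound.

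A secondary inaccuracy: the convexity alternative is not obtained by integrating by parts and trading $\sup|\eta'|$ for $\gamma\sup|\eta''|$. It comes from using the second-order Taylor expansion $f(s)-f(0)\le\frac12 Ms^2$ (valid because $f'(0)=f'(\pi)=0$) on \emph{one} factor of the cube instead of the first-order bound $Ns$ on two factors; this replaces the error term $N\sA^2$ by $\sqrt{M}\,\sA^{5/2}$, turns the quadratic inequality into a quartic in $Y=\sqrt{\sA}$, and is closed by the same connectedness argument. The $M\gamma^4$ threshold arises because the smallness condition is imposed on $\sqrt{M}\,\gamma^2$, not from an extra power of $\gamma$ gained by integration by parts.
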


We remark that the smallness of $\gamma$ is not necessarily so severe, depending on the depth of the water. Indeed, $\gamma\leq O(d)+O(\sqrt{d})$, which is a modest condition if $d$ is large.
We provide two somewhat different proofs, one for which we control the slope $N$, 
and the other for which we control the convexity $M$.

\subsection{History} 
The study of steady water waves dates back to the 18th century with Stokes' work on the celebrated irrotational Stokes' wave. Recently there has been a flurry of work done on the subject, ranging from the construction of small and large amplitude rotational and irrotational waves to the study of more qualitative properties of the waves.  The reader is referred to \cite{survey} for a survey of recent results.  
 Of particular interest is the wave amplitude. The simple bound $\sA \le \frac Q{2g}$ is attained only for extreme waves but in general it is far from being sharp,   Besides, there is no guarantee that the Bernoulli constant $Q$ can be controlled.  Hence a more detailed analysis of the solutions is necessary for a better understanding of the wave amplitude.  

For rotational water waves the first rigorous amplitude bounds were derived in \cite{CSV2} for large amplitude, periodic, symmetric waves with constant \textit{favorable} vorticity. The waves lie on the solution curve $\mathcal C$ constructed in \cite{CSV1}.  It is proven that the waves cannot overhang, nor can they contain internal stagnation points. The bounds are very explicit but the proof involves some heavy machinery.  The authors of \cite{LWW} have recently generalized some of the amplitude bounds of \cite{CSV2} for general \textit{unidirectional} waves with favorable constant vorticity that need not lie on $\mathcal C$, nor be periodic or symmetric.   Waves are unidirectional if $\Psi_Y$ is either strictly negative or strictly positive throughout the fluid, which precludes stagnation points or overhanging wave profiles. However, the waves the authors studied do not necessarily lie on a bifurcation curve.  Their proof relies on a maximum principle argument and the bounds depend only on the value of the vorticity. 

In the adverse (unfavorable) setting, almost no theoretical studies have been carried out. Based on personal correspondence, Lokharu and Wheeler have considered unidirectional waves with very {\it large} adverse vorticity. Their bounds again depend solely on the value of the vorticity. 
On the other hand, the case of very small or just moderate adverse vorticity has been completely open.   
Moreover, in \cite{L}, Lokharu proved that in the unidirectional setting, $Q$ can be bounded by the vorticity.

Nevertheless, there are countless numerical studies of water waves, far too many to list here.  For instance, a comparative numerical study of water waves with adverse vorticity $\ga$ was carried out in \cite{KS} up to the creation of a stagnation point.  There it was shown that $\sA$ is increasing as a function of $\ga$ until it reaches stagnation at the crest, and it is also increasing as a function of the flux $|m|$.  

In this paper, we provide explicit absolute bounds on the wave amplitude for periodic symmetric waves in $\CC$ with very small adverse constant vorticity.

\subsection{Outline}
In Section~\ref{sect:reformulation}, we begin by reformulating the problem, first by using a conformal map to fix the free surface, and then by expressing the resulting problem essentially as a single equation depending only on the surface wave profile $\eta$.  Then we recall  the properties satisfied by the waves on the curve $\mathcal C$ of solutions constructed in \cite{CSV1}. 

In Section~\ref{sect:favorable}, we provide the elementary qualitative proof in the favorable vorticity setting, by relying entirely on the Hopf maximum principle.   

Finally, in Section~\ref{sect:adverse} we prove our main theorem that provides amplitude bounds for adverse vorticity $\ga$. We first provide new bounds for certain integral operators which are then used in a continuation argument to obtain upper bounds for the wave amplitude. The bounds depend on several physical parameters, one of which is the maximum slope $N$ of the wave profile.  We deduce the absolute bound stated in Theorem \ref{thm:adverse} provided we assume that $\ga$ is sufficiently small.  By a somewhat different calculation we obtain similar bounds in terms of the `curvature' $M$ of the wave profile instead of $N$.

\section{Reformulation}\label{sect:reformulation}
\subsection{Conformal transformation and non-local surface formulation}
Since the free surface $\mathcal{S}$ is itself an unknown to be solved for, we view 
the physical domain $\Omega$ as the image of the strip
\begin{equation}
\Omega^* :=\{(x,y)\in\R^2:-d< y < 0\}
\end{equation}
by  a conformal map $X+iY=\xi(x,y)+i\eta(x,y)$.   
The whole problem is thereby transferred to the strip.  
We denote the upper boundary of the conformal region by $\Gamma:=\{(x,y)\in\R^2:y=0\}$ and the bottom by $\{y=-d\}$ for a constant $d>0$ called the {\it conformal depth}. 
This is not the physical depth but numerics show that it typically differs only modestly from the physical depth.  
Taking our problem to have period $2\pi$, each period is the image of a rectangular region of length $2\pi$ as in  Figure~\ref{fig:conformal}. The details of the transformation were carried out in \cite{CSV1} and \cite{CSV2}. 

\begin{figure}
	\centering
	\includegraphics[scale=0.7]{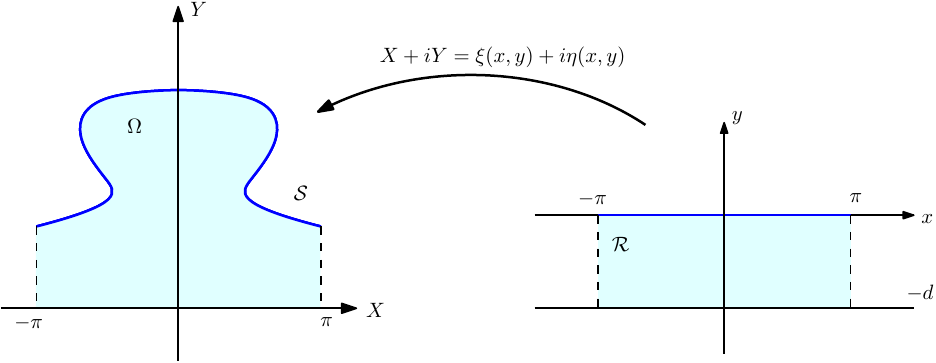}
	\caption{The conformal map}
	\label{fig:conformal}
\end{figure}

 Following \cite{CSV1}, 
we can reformulate the problem as a scalar integro-differential equation for the function $\eta(x,y)$ restricted to the top  $\Gamma$ of the rectangle. This nonlocal formulation involves the Hilbert transform of a periodic strip of depth $d$ defined as
\begin{equation}       \label{Hilbert}
\mathcal{H}f(x)=\sum_{n=1}^{\infty}  \{a_n\coth(nd)\sin(nx) -  b_n\coth(nd)\cos(nx)\},   
\end{equation} 
for any $2\pi$-periodic function $f(x) = \sum_{n=1}^{\infty} \{a_n\cos(nx)+b_n\sin(nx)\}$
with zero average that belongs to $L^2$.  

We denote the transformed stream function by $\psi(x,y):=\Psi(\xi(x,y),\eta(x,y))$. 
Because $\Delta \Psi = -\ga > 0$ in $\Omega$, we have $\Delta\psi>0$ in the strip $\Omega^*$.  
In terms of 
$\mathcal H$, we follow \cite{CSV1} to obtain 
\begin{equation}\label{psi_y}
\psi_y(x,0)=\frac{m}{d} + \frac{\gamma}{2d}[\eta^2]+\gamma\mathcal{H}(\eta\eta') - \gamma\eta\mathcal{H}(\eta')-\gamma\eta.
\end{equation}
Here and elsewhere the square bracket $[\,\,\cdot\,\,]$ denotes the average over a period of length $2\pi$.   
It was shown in \cite{CSV1} that the entire problem can be reformulated in terms of the dynamic boundary condition expressed in terms of $\eta$ and the periodic Hilbert transform.  
The dynamic boundary condition can then easily be expressed as 
\begin{equation}   \label{dynamic_hilbert}
\begin{split}
\bigg(\frac{m}{d} + \frac{\gamma}{2d}[\eta^2]+\gamma\mathcal{H}(\eta\eta') - \gamma\eta\big(1+\mathcal{H}(\eta')\big)\bigg)^2 =(Q-2g\eta)\Big((\eta')^2+(1+\mathcal{H}(\eta'))^2\Big).
\end{split}
\end{equation}
By the Riemann-Hilbert type methods in Theorem 1 of \cite{CSV1}, this  
equation \eqref{dynamic_hilbert} can be shown to be equivalent to the following equation 
\begin{subequations}
	\begin{equation}\label{babenko1}
	\begin{split}
	\mathcal{H}\Big((Q-2g\eta-\gamma^2\eta^2)\eta'\Big)+&\Big(Q-2g\eta-\gamma^2\eta^2\Big)\mathcal{H}(\eta')+2\eta\gamma^2\mathcal{H}(\eta\eta')\\&=\eta\bigg(-\frac{2m\gamma}{d}-\frac{\gamma^2}{d}[\eta^2]+2g\bigg)+\gamma^2\eta^2+2\gamma m-2gd-2g\big[\eta\mathcal{H}(\eta')\big], 
	\end{split}
	\end{equation}
where the quartic terms have disappeared, 
coupled to the scalar condition
	\begin{equation}   \label{equal_averages}
	\begin{split}
	\bigg[\bigg(\frac{m}{d} + \frac{\gamma}{2d}[\eta^2]+\gamma\mathcal{H}(\eta\eta') - \gamma\eta\big(1+\mathcal{H}(\eta')\big)\bigg)^2 \bigg]=\bigg[(Q-2g\eta)\Big((\eta')^2+(1+\mathcal{H}(\eta'))^2\Big) \bigg].
	\end{split}
	\end{equation}
\end{subequations}
These are the same as the equations (2.3) in \cite{CSV2}, where $\mathcal H$ was denoted by $\mathcal C_d$, the vorticity $\gamma$ was denoted by $-\Upsilon$, $d$ was denoted by $h$, and $\eta$ was denoted by $v$.  
We refer the reader to \cite{CSV1} for details of the derivation.

\subsection{Properties of the waves in $\mathcal{C}$}\label{sect:assumptions}
The waves along the curve $\mathcal{C}$ constructed in \cite{CSV1} enjoy all of the following properties which we now list. Since from this point on, the formulation of the problem will only be a scalar one-dimensional equation restricted to the surface $\Gamma$ of the conformal domain, we express these properties in terms of the function 
\begin{equation}\label{eta}
\eta_0(x):=\eta(x,0).
\end{equation}	 
We have the regularity $\eta_0\in C^{2,\alpha}(\Gamma)$ for some $\alpha\in(0,1)$.  
The periodicity and symmetry of the waves is captured by $\eta_0\text{ being }2\pi\text{-periodic and even in }x$. Furthermore, the average is $[\eta_0]=d$. In order for solutions to the problem in conformal variables to yield solutions in the physical plane,  $\text{the mapping }x\mapsto(\xi_0(x),\eta_0(x))\text{ must be injective on }\mathbb{R}$. Finally, the property
\begin{equation}\label{no_stag}
\inf_{\Gamma}(Q-2g\eta_0)^2|\nabla\eta|^2>0,
\end{equation}
ensures that there are no stagnation points on the surface and that $\eta$ is indeed the imaginary part of a conformal map. The waves in $\mathcal C$ have profiles that are strictly monotone in the vertical coordinate between crest ($x=0$) and trough ($x=\pi$).   Hence they satisfy the condition
\begin{equation}\label{nodal}
\partial_x\eta_0<0\text{ on }\Gamma\cap\{x\in(0,\pi)\}\qquad\text{and}\qquad\partial_x\eta_0>0\text{ on }\Gamma\cap\{x\in(-\pi,0)\}.
\end{equation}

Instead of working directly with $\eta_0$, it is convenient to introduce the function 
\begin{equation}   \label{def:f} 
f(x):=\frac{Q}{2g}-\eta_0(x). 
\end{equation} 
Thus $\sA = \eta_0(0)-\eta_0(\pi) = f(\pi)-f(0)$.  
As a direct consequence of the properties of $\eta$, we have the regularity
$f\in C^{2,\alpha}(\Gamma)$,
along with the periodicity and symmetry conditions that $f$  is $2\pi$-periodic even in $x$.  
Clearly $[f]=\frac{Q}{2g}-d$ and \eqref{no_stag} implies that
\begin{equation}
\begin{split}
f(x)>0\quad\text{ for all }x\in\R.
\end{split}
\end{equation}
The monotonicity of the wave profile is captured by the properties 
\begin{equation}
\begin{split}
f'(x)>0\quad&\text{ for all }x\in(0,\pi),\\ f\text{ is maximized at }&\pi\text{ and minimized at }0.
\end{split}
\end{equation}
In order to 
facilitate our presentation, as in \cite{CSV2} we introduce the constant $b$ 
and the operator $\KK$ defined as 
\begin{equation}\label{constants}
\begin{split}
b&:=\frac{Q}{2g} + \frac{\gamma m}{g}-d-[f\mathcal{H}f']-\frac{\gamma Q}{2g^2}\bigg(-\frac{m\gamma}{dg}+\frac{\gamma^2 Q^2}{8dg^3}-\frac{\gamma^2[f^2]}{2dg} \bigg) +\frac{\gamma^2Q^2}{8g^3}.
\end{split}
\end{equation}
\begin{equation}  \label{def:K}
\KK f := f^2\HH f'  + \HH(f^2f')  -2f \HH(ff').
\end{equation}
Expressing \eqref{babenko1} in terms of $f$ given by \eqref{def:f}, 
we obtain the following reformulation:  
\begin{subequations}\label{full_f}
	\begin{equation}\label{f}
	f   + \bigg(-\frac{m\gamma}{dg}+\frac{\gamma^2 Q^2}{8dg^3}-\frac{\gamma^2[f^2]}{2dg} \bigg)f   - \frac{\gamma^2}{2g} f^2  
	=  f\mathcal{H}f'+\mathcal{H}(ff') - \frac{\gamma^2}{2g}\KK f  + b.
	\end{equation} 
This is the same as equation (4.9) in \cite{CSV2}.  
Because our current interest is the amplitude $\sA = f(\pi)-f(0)$ of the wave, 
we evaluate the preceding the equation from the crest to the trough, that is, from $0$ to $\pi$, 
to obtain   
	\begin{equation}\label{f_diff}
	\left\{ 1-\frac{m\gamma}{dg}+\frac{\gamma^2 Q^2}{8dg^3}-\frac{\gamma^2[f^2]}{2dg} \right\}  f\bigg|_{0}^\pi
	-\frac{\gamma^2}{2g}f^2\bigg|_{0}^\pi=\{f\mathcal{H}f'+\mathcal{H}(ff') \}\bigg|_{0}^\pi-\frac{\gamma^2}{2g}  \KK f \bigg|_{0}^\pi.
	\end{equation}
\end{subequations} 
We will work directly with this equation.

\section{Favorable vorticity}             \label{sect:favorable}

In order to bound the amplitude in the favorable vorticity case, we begin by following the procedure of \cite{CSV2}. The key is to obtain upper and lower bounds for \eqref{f_diff}.  
In order to find an upper bound for the left hand side, we add the term  
$$
\frac{\gamma^2}{2g}(f(\pi)-f(0))\bigg(\Big(f\mathcal{H}f'-\mathcal{H}(ff') \Big)(\pi)+\Big(f\mathcal{H}f'-\mathcal{H}(ff') \Big)(0)\bigg)$$ 
to both sides of \eqref{f_diff} to obtain the identity 
\begin{equation}\label{fav1}
\begin{split}
&\bigg(1-\frac{m\gamma}{dg}+\frac{\gamma^2 Q^2}{8dg^3}-\frac{\gamma^2[f^2]}{2dg}\bigg)f\bigg|_{0}^\pi-\frac{\gamma^2}{2g}f^2\bigg|_{0}^\pi+\frac{\gamma^2}{2g}(f(\pi)-f(0))\bigg(\Big(f\mathcal{H}f'-\mathcal{H}(ff') \Big)(\pi)+\Big(f\mathcal{H}f'-\mathcal{H}(ff') \Big)(0)\bigg)\\&=f\mathcal{H}f'+\mathcal{H}(ff')\bigg|_0^\pi+\frac{\gamma^2}{2g}\bigg\{-\mathcal{K}f\bigg|_0^\pi+(f(\pi)-f(0))\bigg(\Big(f\mathcal{H}f'-\mathcal{H}(ff') \Big)(\pi)+\Big(f\mathcal{H}f'-\mathcal{H}(ff') \Big)(0)\bigg)\bigg\}.  
\end{split}
\end{equation}
For brevity we write this long equation as 
\[   \LL f = \RR f,  \]
where $\mathcal{K}$ was defined in \eqref{def:K}.    
The following upper bound for $\LL f$ appeared in \cite{CSV2}.  
\begin{lemma}\label{lem:Lf}
	Assume the favorable vorticity $\gamma\leq0$.   Then 
	\begin{equation}
	\mathcal{L}f\leq \sA := f\bigg|_0^\pi.
	\end{equation}
\end{lemma}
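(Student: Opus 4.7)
The plan is to show $\mathcal{L}f \le \sA$ by factoring out $\sA = f(\pi)-f(0) > 0$ from both sides, and then verifying that the remaining coefficient is $\le 1$ using the favorable sign of $\gamma$ together with the sign of the mass flux $m$.

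\textbf{Step 1: Factor out $\sA$.} Using $f^2\big|_0^\pi = (f(\pi)-f(0))(f(\pi)+f(0)) = \sA\,(f(\pi)+f(0))$, the definition \eqref{fav1} of $\mathcal{L}f$ factors as $\mathcal{L}f = \sA\cdot \mathcal{N}$, where
\[
\mathcal{N} = 1 - \frac{m\gamma}{dg} + \frac{\gamma^2 Q^2}{8dg^3} - \frac{\gamma^2[f^2]}{2dg} - \frac{\gamma^2}{2g}(f(\pi)+f(0)) + \frac{\gamma^2}{2g}\,T_1,
\]
with $T_1 := (f\mathcal{H}f' - \mathcal{H}(ff'))(\pi) + (f\mathcal{H}f' - \mathcal{H}(ff'))(0)$. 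Since $\sA>0$, the lemma reduces to $\mathcal{N}\le 1$.

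\textbf{Step 2: Use the depth identity.} The conformal normalization $[\eta_0]=d$ combined with the definition \eqref{def:f} gives $[f] = Q/(2g) - d$, hence $Q^2/(4g^2) = ([f]+d)^2$. Consequently
\[
\frac{\gamma^2 Q^2}{8dg^3} - \frac{\gamma^2[f^2]}{2dg} = \frac{\gamma^2}{2dg}\bigl(([f]+d)^2 - [f^2]\bigr) = \frac{\gamma^2}{2dg}\bigl(2d[f] + d^2 - \mathrm{Var}(f)\bigr),
\]
where $\mathrm{Var}(f) := [f^2]-[f]^2 \ge 0$. Substituting back, the condition $\mathcal{N}-1\le 0$ becomes
\[
-\frac{m\gamma}{dg} + \frac{\gamma^2}{2g}\Bigl[\,2[f] + d - \frac{\mathrm{Var}(f)}{d} - (f(\pi)+f(0)) + T_1\,\Bigr] \le 0.
\]

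\textbf{Step 3: Sign analysis in the favorable case.} Since $m<0$ and $\gamma\le 0$, the first term satisfies $-m\gamma/(dg) \le 0$. Because $\gamma^2/(2g)\ge 0$, it remains only to verify that the bracket on the right is non-positive, i.e.\ that
\[
T_1 \;\le\; (f(\pi)+f(0)) - 2[f] - d + \frac{\mathrm{Var}(f)}{d}.\tag{$\star$}
\]

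\textbf{Main obstacle.} Inequality $(\star)$ is the crux: it is a pointwise-at-extrema bound on a Hilbert-transform commutator in terms of amplitude and variance data. On the real line the identity $2\tilde f\,\mathcal{H}\tilde f = \mathcal{H}(\tilde f^2) - \mathcal{H}((\mathcal{H}\tilde f)^2)$ (where $\tilde f = f - [f]$) would make the commutator explicit and simplify life, but for the periodic strip transform \eqref{Hilbert} the $\coth(nd)$ weights break this identity. My plan for $(\star)$ is therefore to expand $f = [f] + \sum_{n\ge 1} a_n\cos nx$, compute $T_1$ as a double sum over Fourier modes whose coefficients involve differences like $\coth((j+k)d) - \coth(jd) - \coth(kd)$ and $\coth(md) + \coth((k-m)d) - \coth(kd)$, and exploit the monotone decay of $\coth$ together with the Parseval identity $\mathrm{Var}(f) = \tfrac12\sum a_n^2$ to close the estimate --- this is the bookkeeping carried out in \cite{CSV2}, and we transcribe it essentially unchanged. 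Once $(\star)$ is in hand, Steps 1--3 combine to give $\mathcal{N}\le 1$, hence $\mathcal{L}f \le \sA$ as required.
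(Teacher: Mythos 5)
Your Steps 1 and 2 are correct algebra, but the reduction in Step 3 to the inequality $(\star)$ is where the argument breaks: $(\star)$ is false in general, so no amount of Fourier bookkeeping can establish it. Take $f$ close to the constant $[f]$ (a small-amplitude wave near the start of the curve $\mathcal C$). Then $T_1\to 0$, $\mathrm{Var}(f)\to 0$ and $f(\pi)+f(0)\to 2[f]$, so the right-hand side of $(\star)$ tends to $-d<0$ while the left-hand side tends to $0$; equivalently, the bracket you need to be non-positive tends to $+d>0$. The underlying problem is that the two pieces you separated --- the flux term $-m\gamma/(dg)$ and the bracket --- cannot be handled independently: for near-laminar flow the desired conclusion $\mathcal N\le 1$ holds precisely because $-m\gamma/(dg)+\gamma^2 d/(2g)\le 0$, a coupled condition relating $m$, $\gamma$ and $d$, not a consequence of either summand being non-positive on its own.

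The idea you are missing --- and the entire content of the paper's proof --- is that the combination appearing in $\mathcal N-1$ is, up to the nonnegative factor $-\gamma/g$ and an averaging over $x=0$ and $x=\pi$, exactly the boundary trace $\psi_y(x,0)$ of the transformed stream function written in terms of $f$, cf.\ \eqref{psi_y_f}. Waves on $\mathcal C$ satisfy the no-stagnation condition \eqref{no_stag}, which together with the dynamic boundary condition forces $\psi_y(\cdot,0)$ to have a definite sign, and in the favorable case $\gamma\le0$ that sign is $\psi_y(x,0)\le0$. Multiplying by $-\gamma/g\ge0$ gives the pointwise inequality $-\gamma\psi_y(x,0)/g\le0$, and averaging it at the crest and the trough yields precisely $\mathcal N-1\le0$, with no harmonic-analysis input at all. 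So the correct proof is a sign argument using the physical structure of solutions on $\mathcal C$ rather than a kernel estimate; your proposed fallback to the Fourier computations of \cite{CSV2} targets the wrong inequality, and those computations in any case concern the lower bound for $\mathcal R f$ (Lemma~\ref{lem:Rf}), not this lemma.
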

\begin{proof}  
We provide the proof for the sake of completeness.  
By definition, 
	\begin{equation}
	\begin{split}
	\mathcal{L}f&=\bigg(1-\frac{m\gamma}{dg}+\frac{\gamma^2 Q^2}{8dg^3}-\frac{\gamma^2[f^2]}{2dg}\bigg)f\bigg|_{0}^\pi-\frac{\gamma^2}{2g}f^2\bigg|_{0}^\pi\\&\qquad+\frac{\gamma^2}{2g}(f(\pi)-f(0))\bigg(\Big(f\mathcal{H}f'-\mathcal{H}(ff') \Big)(\pi)+\Big(f\mathcal{H}f'-\mathcal{H}(ff') \Big)(0)\bigg).
	\end{split}
	\end{equation}	
Evaluating \eqref{psi_y} at $f$ yields 
	\begin{equation}\label{psi_y_f}
		\psi_y(x,0)=\gamma\mathcal{H}(ff')-\gamma f\mathcal{H}(f')+\gamma f+\frac{m}{d}-\frac{\gamma Q^2}{8dg^2}+\frac{\gamma[f^2]}{2d}.
	\end{equation}
	From \eqref{no_stag} and the dynamic boundary condition \eqref{dynamic_hilbert}, we know that $\psi_y(x,0)$ is always either strictly positive or strictly negative. For the favorable case, $\gamma\leq0$, we have $\psi_y(x,0)\leq 0$, just as in the discussion at the beginning of section 3 in \cite{CSV2}. 
Multiplying \eqref{psi_y_f} by $-\gamma/g$, we get
	\begin{equation}\label{2}
	0 \geq  -\frac{\gamma}{g}\psi_y(x,0)=\frac{-\gamma^2}{g}\mathcal{H}(ff')+\frac{\gamma^2} {g} f\mathcal{H}(f')-\frac{\gamma^2}{g} f-\frac{m\gamma}{dg}+\frac{\gamma^2 Q^2}{8dg^3}-\frac{\gamma^2[f^2]}{2gd}.
	\end{equation}
This inequality is exactly where the assumption of favorable vorticity is crucial!  
Evaluating \eqref{2} at both $x=\pi$ and $x=0$ and taking the average yields
	\begin{equation}
	\begin{split}
	\frac{\gamma^2}{2g}\bigg(\Big(f\mathcal{H}f'-\mathcal{H}(ff') \Big)(\pi)+\Big(f\mathcal{H}f'-\mathcal{H}(ff') \Big)(0)\bigg)-\frac{\gamma^2}{2g} (f(\pi)+f(0))-\frac{m\gamma}{dg}+\frac{\gamma^2 Q^2}{8dg^3}-\frac{\gamma^2[f^2]}{2gd}\leq0.
	\end{split}
	\end{equation}
Adding $1$ to both sides and then multiplying both sides by $\sA = f(\pi)-f(0)$ yields 
exactly $\mathcal{L}f\leq \sA$.   This concludes the proof of the lemma. 
\end{proof}

\begin{remark}
	Lemma~\ref{lem:Lf} clearly relies on the sign of the vorticity. Indeed, this bound would not be valid in the adverse case, which leads to the need for a new approach to treat that situation, which we will do in Section\ref{sect:adverse}.
\end{remark}
Because we now have a simple upper bound of $\mathcal{L}f$, a suitable lower bound  for $\mathcal{R}f$ would put us in good shape for obtaining bounds on the amplitude. The following lemma asserts a lower bound on $\mathcal{R}f$.  
We provide it by a proof that is completely different from the one carried out in \cite{CSV2}, in that it relies solely on the maximum principle.  
We recall the notation that $\sA=f(\pi)-f(0)$ and that   $\mathcal{R}f$ is the right hand side of \eqref{fav1}.

\begin{lemma}\label{lem:Rf}
	For any $2\pi$-periodic even function $f$ that is strictly increasing on $(0,\pi)$, we have 
	\begin{equation}
	\mathcal{R}f >   \frac1{2d}\sA^2  +  \frac{\gamma^2}{12gd}\sA^3. 
	\end{equation}   
\end{lemma}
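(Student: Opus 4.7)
The plan is to express every piece of $\mathcal{R}f$ as an integral over $(0,\pi)$ against the symmetric kernel $P(y):=\beta(\pi-y)+\beta(y)$, where $\beta(x)=2\sum_{n\ge 1}\coth(nd)\sin(nx)$ is the convolution kernel of $\HH$, weighted by $f'\ge 0$, and then to apply the pointwise lower bound $P(y)\ge \pi/d$ on $(0,\pi)$.

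First, I would use the formula $\HH g(x)=\frac{1}{2\pi}\,\mathrm{p.v.}\!\int\beta(x-y)g(y)\,dy$ together with the algebraic identity $\KK f(x)=\frac{1}{2\pi}\int\beta(x-y)(f(x)-f(y))^2 f'(y)\,dy$, which one checks by expanding the square in the combination $f^2\HH f'+\HH(f^2 f')-2f\HH(ff')$. Since $f$ is even and $f'$ odd, every boundary-differenced integrand is even in $y$, so integration on $(-\pi,\pi)$ halves to $(0,\pi)$. This produces
\begin{align*}
\HH(ff')\big|_0^\pi&=\frac{1}{\pi}\int_0^\pi P(y)\,f(y)f'(y)\,dy,\\
\KK f\big|_0^\pi&=\frac{1}{\pi}\int_0^\pi\!\bigl[\beta(\pi-y)(f(\pi)-f(y))^2+\beta(y)(f(y)-f(0))^2\bigr]f'(y)\,dy,
\end{align*}
together with analogous formulas for $(f\HH f'-\HH(ff'))(\pi)$ and $(f\HH f'-\HH(ff'))(0)$ in which the squares are replaced by single factors. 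The key algebraic identities $\sA-(f(\pi)-f(y))=f(y)-f(0)$ and $\sA-(f(y)-f(0))=f(\pi)-f(y)$ then telescope the entire $\gamma^2$ bracket of $\mathcal{R}f$ into a single clean expression:
\[
-\KK f\big|_0^\pi+\sA\bigl[(f\HH f'-\HH(ff'))(\pi)+(f\HH f'-\HH(ff'))(0)\bigr]=\frac{1}{\pi}\int_0^\pi(f(y)-f(0))(f(\pi)-f(y))\,P(y)\,f'(y)\,dy.
\]

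The main technical obstacle is the pointwise kernel estimate $P(y)\ge \pi/d$ on $(0,\pi)$. I would derive it from the Mittag--Leffler partial fractions $\coth(nd)=\frac{1}{nd}+\sum_{k\ge 1}\frac{2nd}{n^2d^2+k^2\pi^2}$: the $1/(nd)$ part contributes exactly $\pi/d$ to $P$ via the classical identity $\sum_{n\text{ odd}}\sin(ny)/n=\pi/4$ valid on $(0,\pi)$, while the remainder, rewritten via the Fourier series $\sum_{n\ge 1}\frac{n\sin(ny)}{n^2+\mu^2}=\frac{\pi}{2}\frac{\sinh(\mu(\pi-y))}{\sinh(\mu\pi)}$ with $\mu_k=k\pi/d$, equals the manifestly positive sum $\frac{2\pi}{d}\sum_{k\ge 1}\frac{\sinh(\mu_k y)+\sinh(\mu_k(\pi-y))}{\sinh(\mu_k\pi)}$.

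Once $P(y)\ge\pi/d$ is in hand, both bounds follow at once. The term $f\HH f'|_0^\pi$ is strictly positive by direct inspection of the kernel representation: on $(0,\pi)$ both $\beta(\pi-y)$ and $\beta(y)$ are positive, so $\HH f'(\pi)=\frac{1}{\pi}\int_0^\pi\beta(\pi-y)f'(y)\,dy>0$ and $\HH f'(0)=-\frac{1}{\pi}\int_0^\pi\beta(y)f'(y)\,dy<0$, making each of $f(\pi)\HH f'(\pi)$ and $-f(0)\HH f'(0)$ positive. Hence
\[
\bigl(f\HH f'+\HH(ff')\bigr)\big|_0^\pi\,>\,\HH(ff')\big|_0^\pi\,\ge\,\frac{1}{d}\int_0^\pi f(y)f'(y)\,dy=\frac{(f(\pi)+f(0))\sA}{2d}\,>\,\frac{\sA^2}{2d},
\]
the last strict inequality using $f(0)>0$. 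For the $\gamma^2$ bracket, the substitution $u=f(y)-f(0)$ reduces its integral to $\frac{1}{d}\int_0^{\sA}u(\sA-u)\,du=\frac{\sA^3}{6d}$. Multiplying the latter by $\gamma^2/(2g)$ and adding produces $\mathcal{R}f>\frac{\sA^2}{2d}+\frac{\gamma^2}{12gd}\sA^3$, as required.
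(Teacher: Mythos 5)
Your argument is correct, but it takes a genuinely different route from the paper's. The paper's proof deliberately avoids kernel estimates: it writes the quadratic part $\VV$ and the cubic part $\WW$ as $\HH'B^\pi(\pi)+\HH'B^0(0)$ and $\HH'S^\pi(\pi)+\HH'S^0(0)$ for auxiliary functions attaining their maximum value $0$ at the evaluation point, invokes the identity $\GG\phi=\tfrac1d[\phi]+\HH'\phi$ for the Dirichlet--Neumann operator $\GG$, and uses the Hopf lemma to discard the positive terms $\GG B^\pi(\pi)$, $\GG B^0(0)$, $\GG S^\pi(\pi)$, $\GG S^0(0)$, leaving only the computable averages $-\tfrac1d[B^\pi+B^0]$ and $-\tfrac1d[S^\pi+S^0]$. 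You instead reduce everything to integrals of $f'$ against the symmetrized kernel $P(y)=\beta(y)+\beta(\pi-y)=4\sum_{n\,\mathrm{odd}}\coth(nd)\sin(ny)$ and prove the pointwise bound $P\ge\pi/d$ on $(0,\pi)$ by Mittag--Leffler partial fractions (your closed form for the remainder is correct: it equals $\tfrac{2\pi}{d}\sum_k\cosh(\mu_k(\pi-2y)/2)/\cosh(\mu_k\pi/2)>0$); your telescoping identity producing the weight $(f(y)-f(0))(f(\pi)-f(y))f'(y)$ is the kernel-side counterpart of the paper's $S^\pi,S^0$. The two proofs are essentially dual: the constant $\pi/d$ in your kernel bound is exactly the zero mode $\tfrac1d[\phi]$ of $\GG$, and the positivity of the non-constant remainder of $P$ is the kernel-level manifestation of the Hopf inequality. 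The paper's route buys independence from explicit kernel computations (its stated motivation, with a view to more general settings); yours buys a self-contained quantitative estimate closer in spirit to \cite{CSV2}, with all constants visible and the same final bound. Two small points to make explicit: (i) the last step ($f(\pi)+f(0)>\sA$, and the positivity of $f(\pi)\HH f'(\pi)-f(0)\HH f'(0)$) uses $f>0$, which is not listed in the lemma's hypotheses but holds for the waves in question and is equally used in the paper's proof; (ii) the principal values in your kernel representations converge only because $f'(0)=f'(\pi)=0$ and because the factors $f(x)-f(y)$ vanish at $y=x$, which deserves a sentence.
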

Temporarily assuming the validity of this lemma, the main result of this section follows easily. 
\begin{proof} [Proof of Theorem \ref{thm:favorable}]  
Combining Lemmas~\ref{lem:Lf} and \ref{lem:Rf}, we obtain 
	\begin{equation}
	\frac1{2d} \sA^2  +    \frac{\gamma^2}{12gd} \sA ^3<\mathcal{R}f=\mathcal{L}f\leq \sA .     
	\end{equation}
This yields
	\begin{equation}
	\sA    <  \min\left\{2d , \frac1{|\ga|} \sqrt{12gd} \right \}
	\end{equation}
 and concludes the proof of the theorem.  
\end{proof}

\begin{proof}[ Proof of Lemma \ref{lem:Rf}]
We split $\RR f$ into two parts 
	\begin{equation}\label{Rf}
	\begin{split}
	\mathcal{R}f:=\mathcal{V}+\mathcal{W}.  
	\end{split}
	\end{equation}
The quadratic part is
	\begin{equation}\label{V}
		\mathcal{V}:=\bigg(f\mathcal{H}f'+\mathcal{H}(ff')\bigg)\bigg|_0^\pi
	\end{equation}
and the cubic part is
	\begin{equation}\label{W}
		\mathcal{W}:=\frac{\gamma^2}{2g}\bigg\{-\mathcal{K}f\bigg|_0^\pi+(f(\pi)-f(0))\bigg(\Big(f\mathcal{H}f'-\mathcal{H}(ff') \Big)(\pi)+\Big(f\mathcal{H}f'-\mathcal{H}(ff') \Big)(0)\bigg)\bigg\}.
	\end{equation}
	
We begin with an analysis of $\VV$.  Let us denote $\HH' = \HH\pa_x = \pa_x\HH$.  
In terms of the pair of functions 
\begin{equation}\label{B}
	\begin{split}
	B^{\pi}(x)&:=-\big(f(\pi)-f(x)\big)\big(\tfrac{3}{2}f(\pi)+\tfrac{1}{2}f(x) \big)\\
	B^{0}(x)&:=-\big(f(x)-f(0)\big)\big(\tfrac{3}{2}f(0)+\tfrac{1}{2}f(x) \big),
	\end{split}
\end{equation}
we claim that 
\bq   \label{VBB}
	\VV = \HH'B^\pi(\pi) + \HH'B^0(0).  
\eq
Indeed, this is true because both sides of \eqref{VBB} are equal to 
\bq
f(\pi)\HH'f(\pi)  +   \tfrac12 \HH'(f^2)(\pi)  -  f(0)\HH'f(0)  -  \tfrac12 \HH'(f^2)(0).  
\eq
Notice that $B^\pi(x)$ is negative except at $x=\pi$ where it vanishes. 
Similarly, $B^0(x)$  is negative except at $x=0$ where it vanishes.

Now we introduce the Dirichlet operator $\GG$, which is defined as follows.  
If $\phi$ is any $2\pi-$periodic function, then 
\bq 
 (\GG\phi)(x) = \pa_y u(x,0),   \eq 
where $u(x,y)$ is the unique harmonic function in $\Omega^*$ that satisfies $ u(x,-d)=0,\ u(x,0)=\phi(x)$.  
It enjoys the following three properties.  

(i)  If $x_M$ is a point where $\phi(x)$ is maximized, then $\GG\phi(x_M) = \pa_yu(x_M,0)>0$.  
This is a direct consequence of the Hopf maximum principle.  
This inequality is the key to our proof.  

(ii)  $\pa_x\GG\phi = \GG\pa_x\phi$.  That is, $(\GG\phi)' = \GG(\phi')$.  

(iii)  \bq 
\GG\phi = \frac1d[\phi]  +  \HH'(\phi).  \eq   
Indeed, this follows directly from the Fourier expansions of $\phi(x)$ and $u(x,y)$.  

From property (iii) and \eqref{VBB}, we have 
\begin{equation}
	\mathcal{V}=\mathcal{G}B^\pi(\pi)-\frac{1}{d}\big[B^\pi \big]+\mathcal{G}B^0(0)-\frac{1}{d}\big[B^0\big].
\end{equation}
Because $B^\pi$ has its maximum of $0$ at $x=\pi$ and $B^0$ has its maximum of $0$ at $x=0$,  property (i) implies that
\begin{equation}
	\mathcal{G}B^\pi(\pi)>0\qquad\text{ and }\qquad\mathcal{G}B^0(0)>0.
\end{equation} 
Hence 
\begin{equation}
	\mathcal{V}>-\frac{1}{d}\big([B^\pi]+[B^0] \big). 
\end{equation} 
We note that 
\bq
-B^\pi-B^0 = -\frac12f^2 - f(\pi)f + \frac32f^2(\pi)  + \frac12f^2 + f(0)f - \frac32f^2(0)  
		=  \sA \{-f +\frac32f(\pi) + \frac32 f(0) \}.     \eq
Hence 
\bq		
-(B^\pi+B^0) > \frac12\sA \{f(\pi)+3f(0)\}  > \frac12\sA^2 ,  \eq 
whence 
\bq    \label{lowerboundV}
\VV  >  \frac1{2d} \sA^2.  \eq  

\bigskip  Now we analyze $\WW$.  
We define the function
	\begin{equation}\label{lab2}
		S(x,y):=\frac{1}{6}\big(f(y)-f(x) \big)^2\big(3f(\pi-y)-2f(x)-f(y) \big), 
	\end{equation} 
as well as 
	\begin{equation}\label{Spi}
	\begin{split}
	&S^\pi(x):=S(x,\pi)=\frac{1}{6}\big(f(\pi)-f(x) \big)^2\big(3f(0)-2f(x)-f(\pi) \big)
	\end{split}
	\end{equation}
and
	\begin{equation}\label{S0}
	\begin{split}
	&S^0(x):=S(x,0)=\frac{1}{6}\big(f(0)-f(x) \big)^2\big(2f(x)+f(0)-3f(\pi) \big). 
	\end{split}
	\end{equation}
Notice that $S^\pi(x)$ is negative except at $x=\pi$ where it vanishes. 
Similarly, $S^0(x)$  is negative except at $x=0$ where it vanishes.  
Expanding both functions, we calculate $S^\pi(x)+S^0(x) = -\frac16\sA^3$.  
	
We claim that  
\bq    \label{WHS}
\frac{2g}{\gamma^2}\mathcal{W}=\HH'S^\pi(\pi) + \HH'S^0(0).   \eq
In order to prove the claim, an expansion of \eqref{W} into its eight individual terms yields 
\bq    \label{Wexpanded}
\frac{2g}{\ga^2} \WW  =  -\tfrac13\HH'f \bigg|_0^\pi  +\tfrac12 (f(\pi)+f(0)) \HH'(f^2) \bigg|_0^\pi   
- f(0)f(\pi) \HH'f \bigg|_0^\pi .  
\eq
On the other hand, expansions of $S^\pi$ and $S^0$ into their individual terms, followed by an application of the operator $\HH'$, yields 
\bq    \label{HSpi}
\HH'(S^\pi) = -\tfrac13\HH'(f^3)  +\tfrac12(f(\pi)+f(0)) \HH'(f^2) - f(0)f(\pi)\HH'f  \eq 
and 
\bq    \label{HSzero}  
\HH'(S^0) = \tfrac13\HH'(f^3)  -\tfrac12(f(\pi)+f(0)) \HH'(f^2) + f(0)f(\pi)\HH'f  \eq 
Evaluation of \eqref{HSpi} at $\pi$ and \eqref{HSzero} at $0$ and adding them yields exactly 
\eqref{Wexpanded}.  This proves the claim.  

By property (iii) of $\GG$, we can rewrite \eqref{WHS} as 
	\begin{equation}\label{Q4}
	\frac{2g}{\gamma^2}\mathcal{W}=\mathcal{G}S^\pi(\pi)-\frac{1}{d}\big[S^\pi\big]+\mathcal{G}S^0(0)-\frac{1}{d}\big[S^0\big]
	\end{equation}	
Because $S^\pi$ has its maximum of $0$ at $x=\pi$ and $S^0$ has its maximum of $0$ at $x=0$,  property (i) of $\GG$ implies that
	\begin{equation}
		\mathcal{G}S^\pi(\pi)>0\qquad\text{and}\qquad\mathcal{G}S^0(0)>0,
	\end{equation}
so that 
	\begin{equation}   \label{lowerboundW}
	\frac{2g}{\gamma^2}\mathcal{W}
	>  -\frac{1}{d}\Big[S^\pi + S^0\Big] 
	=  \frac 1{6d} \sA^3
	\end{equation}
Insertion of the lower bounds \eqref{lowerboundV} and \eqref{lowerboundW} into 
 $\RR f = \VV + \WW$ completes the proof of Lemma \ref{lem:Rf} and therefore of Theorem \ref{thm:favorable}.


\end{proof}

\section{ Adverse Vorticity}\label{sect:adverse}

This section is devoted to the adverse case.  

\subsection{The Periodic Hilbert Transform}

Instead of the Dirichlet-Neumann operator, in this section we will use the explicit integral representation of the Hilbert transform.  
The periodic Hilbert transform $\mathcal{H}$ was defined in \eqref{Hilbert}.  
\begin{lemma}
	$\HH$ can be expressed as a convolution, namely,  
	\begin{equation}\label{H}
	\mathcal{H}(F)(x)=\frac{1}{2\pi}\textup{p.v.}\int_{-\pi}^\pi\beta(x-s)F(s)ds,\qquad x\in\R
	\end{equation}
	for any smooth $2\pi$-periodic function $F:\R\to\R$ with mean zero.  The kernel $\beta:\R\setminus2\pi\Z\to\R$ is given by the series 
	\begin{equation}\label{beta}
	\beta(s)=-\frac{s}{d}+\frac{\pi}{d}\sum_{n=-\infty}^{\infty}\bigg\{\operatorname{coth}\Big(\frac{\pi}{2d}(s-2\pi n)\Big)+\sgn(n) \bigg\}.
	\end{equation}
	It is $2\pi$-periodic and odd.  
	It has an $s^{-1}$ singularity at $s=0$ and is smooth except at the integer multiples of $2\pi$.  
	It is strictly decreasing on $(0,2\pi)$, where it goes from $=\infty$ to $-\infty$.  .  
\end{lemma}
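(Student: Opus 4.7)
The plan is to verify convergence and symmetries of the series in \eqref{beta} directly, then match its Fourier coefficients with the multipliers $\coth(nd)$ appearing in \eqref{Hilbert}.

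First I establish well-definedness and smoothness. Since $\coth(w)+\sgn(\mathrm{Re}\,w)\to 0$ exponentially as $|\mathrm{Re}\,w|\to\infty$, each summand $\coth(\pi(s-2\pi n)/(2d))+\sgn(n)$ tends to $0$ exponentially in $|n|$ uniformly on compact subsets of $\R\setminus 2\pi\Z$, giving absolute uniform convergence and smoothness of $\beta$ there. The only singularity in $[-\pi,\pi]$ comes from the $n=0$ term, where $(\pi/d)\coth(\pi s/(2d)) = 2/s + O(s)$, producing the $s^{-1}$ pole. Oddness follows by sending $n\to -n$ and using the oddness of $\coth$ and $\sgn$. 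For $2\pi$-periodicity, I reindex $n\to n-1$ in the $\coth$ sum so that its arguments return to those appearing in $\beta(s)$; the resulting mismatch in the $\sgn$ counterterms sums to $+2$, and $\frac{\pi}{d}\cdot 2 = \frac{2\pi}{d}$ exactly cancels the $-\frac{2\pi}{d}$ shift coming from the linear term $-s/d$.

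For strict monotonicity on $(0,2\pi)$, termwise differentiation, justified by uniform convergence on compacts, yields
\begin{equation*}
\beta'(s) = -\frac{1}{d} - \frac{\pi^2}{2d^2}\sum_{n\in\Z}\operatorname{csch}^2\!\Bigl(\frac{\pi(s-2\pi n)}{2d}\Bigr) < 0,
\end{equation*}
since every summand is positive. Combined with the $+2/s$ blow-up at $0^+$ and the identity $\beta(2\pi-s) = -\beta(s)$ forced by oddness plus periodicity, this produces the claimed sweep from $+\infty$ to $-\infty$ on $(0,2\pi)$.

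The heart of the lemma is the convolution identity \eqref{H}. By linearity, continuity on $L^2$, and the product-to-sum formulas, it suffices to verify, for each basis mode $\cos(nx)$ or $\sin(nx)$ with $n\ge 1$, the single scalar identity $\frac{1}{\pi}\int_0^\pi\beta(s)\sin(ns)\,ds = \coth(nd)$. To compute the integral I would unfold the periodic sum: substituting $u=s-2\pi k$ in the $k$-th summand and using $\sin(n(u+2\pi k))=\sin(nu)$ shows that on each interval $(-\pi-2\pi k,\pi-2\pi k)$ the $\sgn(k)$ counterterm combines with $\sgn(u)$ so that the rearranged integrand $[\coth(\pi u/(2d)) - \sgn(u)]\sin(nu)$ is absolutely integrable on all of $\R$. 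Expanding $\coth(\pi u/(2d)) - 1 = 2/(e^{\pi u/d}-1) = 2\sum_{k\ge 1}e^{-\pi uk/d}$ for $u>0$, summing the resulting geometric series, and invoking the Mittag--Leffler expansion $\coth(nd) = 1/(nd) + \sum_{k\ge 1}2nd/((nd)^2+k^2\pi^2)$, yields $2d\coth(nd) - 2/n$ for this integral. Adding the residual contribution $\frac{\pi}{d}\int_{-\pi}^\pi\sgn(u)\sin(nu)\,du$ and the contribution $\int_{-\pi}^\pi(-s/d)\sin(ns)\,ds$ from the linear term, every non-$\coth$ piece cancels exactly, leaving $2\pi\coth(nd)$.

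The main obstacle is the non-absolute convergence of $\sum_k\coth(\pi(s-2\pi k)/(2d))$; the $\sgn(k)$ counterterms in \eqref{beta} are precisely designed to regularize this divergence, and the regrouping described above is what makes the unfolding into a single absolutely convergent integral on $\R$ rigorous. Everything else is routine bookkeeping of exponential sums.
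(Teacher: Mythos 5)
Your proof is correct in substance; note that the paper itself offers no argument for this lemma beyond citing \cite{CSV2}, so you are supplying the missing details rather than diverging from an existing route. Your reduction to the single scalar identity $\frac1\pi\int_0^\pi\beta(s)\sin(ns)\,ds=\coth(nd)$ is the right one (oddness of $\beta$ kills the cosine moments, and the product-to-sum step converts convolution against $\cos(nx)$, $\sin(nx)$ into the multipliers $\coth(nd)\sin(nx)$, $-\coth(nd)\cos(nx)$ of \eqref{Hilbert}), and I checked the bookkeeping: the unfolded integral gives $\frac{\pi}{d}\bigl(2d\coth(nd)-\tfrac2n\bigr)$, the residual $\frac{\pi}{d}\int_{-\pi}^\pi\sgn(u)\sin(nu)\,du=\frac{2\pi}{nd}(1-(-1)^n)$, and the linear term $\frac{2\pi(-1)^n}{nd}$, which indeed sum to $2\pi\coth(nd)$. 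The periodicity cancellation ($\sum_{n}[\sgn(n+1)-\sgn(n)]=2$ against the $-2\pi/d$ shift), the oddness, the $2/s$ pole from the $n=0$ term, and the termwise-differentiated negativity of $\beta'$ are all correct. One small slip: you write that $\coth(w)+\sgn(\mathrm{Re}\,w)\to0$ as $|\mathrm{Re}\,w|\to\infty$; the correct decay is $\coth(w)-\sgn(\mathrm{Re}\,w)\to0$. Your subsequent conclusion is nevertheless right, because for $|n|$ large the argument $\frac{\pi}{2d}(s-2\pi n)$ has sign $-\sgn(n)$, so the summand is exactly of the form $\coth(w)-\sgn(\mathrm{Re}\,w)$; the same sign-matching is what makes your $k=0$ residual term necessary and your unfolded integrand $[\coth(\pi u/(2d))-\sgn(u)]\sin(nu)$ absolutely integrable on $\R$. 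With that typo fixed, the argument is complete.
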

\begin{proof}
	The proof was given in \cite{CSV2}.  
\end{proof}

Lower and upper bounds of the kernel $\beta$ are given in the next two lemmas.  
\begin{lemma}    \label{lem:beta}
	Given $d>0$, $\beta(s)$ is a positive function of $s\in(0,\pi]$, strictly decreasing from $+\infty$ to $0$. Furthermore,
	\begin{equation}
	\beta\left(\frac{\pi}2\right)\geq\frac{\pi-2}{\pi} > 0.363 
	\qquad\text{for all }d\in(0,\infty).
	\end{equation}
	Moreover, the function $\beta$ is odd, $2\pi$-periodic and strictly decreasing on $(0,2\pi)$.
\end{lemma}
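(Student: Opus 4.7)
The plan is to inherit all qualitative facts about $\beta$ from the preceding lemma and then derive the pointwise lower bound $\beta(\pi/2)\geq (\pi-2)/\pi$ from a Fourier series representation of $\beta$. The qualitative statements are immediate: oddness together with $2\pi$-periodicity forces $\beta(\pi)=\beta(-\pi)=-\beta(\pi)$, hence $\beta(\pi)=0$; combined with strict decrease on $(0,2\pi)$ and the $s^{-1}$ singularity at $s=0$, this yields $\beta>0$ on $(0,\pi)$ with $\beta(s)\to+\infty$ as $s\to 0^+$ and $\beta(\pi)=0$, covering every qualitative claim of the lemma.

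For the quantitative bound I would first derive the Fourier expansion of $\beta$. From the multiplier description \eqref{Hilbert} together with the convolution form $\mathcal{H}f=\frac{1}{2\pi}\beta*f$, testing against $f(s)=\cos(ns)$ and using $\mathcal{H}(\cos(n\cdot))=\coth(nd)\sin(n\cdot)$ identifies the sine Fourier coefficients of the odd function $\beta$ as $b_n=2\coth(nd)$. Splitting $\coth(nd)=1+2/(e^{2nd}-1)$ and invoking the classical Abel-summed identity $\cot(s/2)=2\sum_{n\geq 1}\sin(ns)$ valid on $(0,2\pi)$, I obtain the pointwise representation
\begin{equation*}
\beta(s)=\cot(s/2)+4\sum_{n=1}^{\infty}\frac{\sin(ns)}{e^{2nd}-1},
\end{equation*}
where the tail is absolutely convergent for every $d>0$.

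Evaluating at $s=\pi/2$ gives $\cot(\pi/4)=1$ and $\sin(n\pi/2)=0$ for even $n$ while $\sin((2k+1)\pi/2)=(-1)^k$, so only odd indices contribute:
\begin{equation*}
\beta(\pi/2)=1+4\sum_{k=0}^{\infty}\frac{(-1)^k}{e^{2(2k+1)d}-1}.
\end{equation*}
Expanding $1/\cosh(2md)=2\sum_{k\geq 0}(-1)^ke^{-(2k+1)\cdot 2md}$ and interchanging the two summations turns the tail into the manifestly positive form $2\sum_{m=1}^{\infty}1/\cosh(2md)$, whence $\beta(\pi/2)>1$ for every $d>0$. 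Since $(\pi-2)/\pi<1$, the stated inequality follows a fortiori; the constant in the lemma is, in fact, quite loose.

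The main obstacle will be the careful justification of the subtraction of the Abel-divergent series $\cot(s/2)=2\sum\sin(ns)$. One way to handle this rigorously is Abel summation combined with Abel's continuity theorem applied to the absolutely convergent remainder. Alternatively, one can bypass divergent series altogether by working directly from the absolutely convergent series \eqref{beta} at $s=\pi/2$: grouping the $n$-th and $(-n)$-th terms, applying the identity $\coth A-\coth B=\sinh(B-A)/(\sinh A\sinh B)$, and then a short Poisson-summation step using $\sinh x/\sinh(2x)=1/(2\cosh x)$ leads directly to the same closed form $\beta(\pi/2)=1+2\sum_{m\geq 1}1/\cosh(2md)$, from which the lower bound is immediate.
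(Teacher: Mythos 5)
Your argument is correct, and it is genuinely different from the source: the paper offers no proof here, deferring entirely to \cite{CSV2}, where the bound $\beta(\pi/2)\geq(\pi-2)/\pi$ is obtained by directly estimating the lattice sum \eqref{beta}. You instead work on the Fourier side: reading off the sine coefficients $b_n=2\coth(nd)$ from the multiplier form \eqref{Hilbert} and the convolution form \eqref{H}, splitting off the conditionally convergent part $2\sum\sin(ns)=\cot(s/2)$, and resumming the absolutely convergent remainder. I checked the closed form
\begin{equation*}
\beta\left(\tfrac{\pi}{2}\right)=1+2\sum_{m=1}^{\infty}\frac{1}{\cosh(2md)},
\end{equation*}
and it is right (it is consistent with both asymptotic regimes: $\beta(\pi/2)\sim\pi/(2d)$ as $d\to0$ and $\beta(\pi/2)\to1$ as $d\to\infty$). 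This buys more than the stated lemma: you get $\beta(\pi/2)>1$, strictly sharper than $(\pi-2)/\pi\approx0.363$, and since the right-hand side decreases to $1$ as $d\to\infty$, your formula shows that $1$ is the \emph{optimal} $d$-uniform lower bound --- which answers the remark in the paper's proof that ``the optimal lower bound is unknown.''

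The one step that needs the care you already flag is the pointwise identity $\beta(s)=\cot(s/2)+4\sum_{n\geq1}\sin(ns)/(e^{2nd}-1)$: since $\beta$ has a nonintegrable $s^{-1}$ singularity, one cannot just invoke pointwise convergence of its Fourier series. The cleanest repair is the one implicit in your decomposition: both $\beta(s)$ and $\cot(s/2)$ behave like $2/s$ at the origin, so their difference minus the smooth tail is a bounded odd periodic function all of whose Fourier coefficients vanish, hence it is identically zero; this avoids Abel summation entirely. With that justification spelled out, the proof is complete, and the qualitative assertions (oddness, periodicity, $\beta(\pi)=0$, strict monotonicity, positivity on $(0,\pi)$) do follow, as you say, from the preceding lemma.
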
 
\begin{proof}
	The proof was given in \cite{CSV2}.  The optimal lower bound is unknown.  
\end{proof}

\begin{lemma}\label{lem:bounds_beta_der}
	Given $d>0$, let $\beta$ be defined by \eqref{beta}. Then for $s\in[-\pi,\pi]$ we have 
	\begin{equation}
	-\beta'(s)<\frac{1}{d}+\frac{2}{s^2}+\frac{1}{2}.
	\end{equation}
\end{lemma}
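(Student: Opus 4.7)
The plan is to differentiate the explicit series \eqref{beta} for $\beta(s)$ term by term. Since $\tfrac{d}{dx}\coth(x) = -\operatorname{csch}^2(x)$, this yields
\begin{equation*}
-\beta'(s) = \frac{1}{d} + \frac{\pi^2}{2d^2}\sum_{n \in \mathbb{Z}}\operatorname{csch}^2\!\left(\tfrac{\pi(s-2\pi n)}{2d}\right),
\end{equation*}
with the differentiated series converging absolutely and uniformly on compact subsets of $\mathbb{R}\setminus 2\pi\mathbb{Z}$ thanks to the exponential decay of $\operatorname{csch}^2$ at infinity. The constant $1/d$ is already the first term in the claimed bound, so my strategy is to split the remaining sum into the $n=0$ piece, which should absorb $2/s^2$, and the $|n|\ge 1$ piece, which should absorb $\tfrac12$. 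The only tool I expect to need is the elementary strict inequality $\operatorname{csch}^2(x) < 1/x^2$ for $x>0$, equivalent to $\sinh(x) > x$.

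For the $n=0$ contribution, the bound $\operatorname{csch}^2(\pi s/(2d)) < 4d^2/(\pi^2 s^2)$ gives a quantity strictly less than $2/s^2$ after multiplication by $\pi^2/(2d^2)$, accounting for the middle term. For $|n|\ge 1$ and $s\in[-\pi,\pi]$ one has $|s-2\pi n| \ge (2|n|-1)\pi$; since $\operatorname{csch}^2$ is even and decreasing on $(0,\infty)$, each summand is strictly less than $4d^2/(\pi^4(2|n|-1)^2)$. Summing over $n\ne 0$ and using $\sum_{n\ge 1}(2n-1)^{-2} = \pi^2/8$ gives
\begin{equation*}
\frac{\pi^2}{2d^2}\sum_{n\ne 0}\operatorname{csch}^2\!\left(\tfrac{\pi(s-2\pi n)}{2d}\right) < \frac{\pi^2}{2d^2}\cdot 2 \cdot \frac{4d^2}{\pi^4}\cdot\frac{\pi^2}{8} = \frac{1}{2},
\end{equation*}
which produces the final $\tfrac12$. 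Combining the three pieces $1/d$, $2/s^2$, $\tfrac12$ then yields the claim, with the overall strict inequality inherited from $\sinh(x) > x$.

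I do not anticipate a serious obstacle here. The only mildly delicate point is that the numerical constants for the $|n|\ge 1$ tail must line up \emph{exactly} to give strictly less than $\tfrac12$ rather than some larger value; this works because $\tfrac{4}{\pi^2}\cdot\tfrac{\pi^2}{8}\cdot 2 = 1$, and it means the bound is essentially tight as $d\to\infty$. In particular, the elementary estimate $\operatorname{csch}^2(x) < 1/x^2$ should be used without any relaxation, since any weaker pointwise bound would fail to close the numerics.
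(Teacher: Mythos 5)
Your proposal is correct and follows essentially the same route as the paper: differentiate the series term by term, bound $\operatorname{csch}^2(x)<1/x^2$ via $\sinh(x)>x$, let the $n=0$ term produce $2/s^2$, and control the $|n|\ge 1$ tail using $|s-2\pi n|\ge(2|n|-1)\pi$ together with $\sum_{n\ge1}(2n-1)^{-2}=\pi^2/8$ to get $\tfrac12$. If anything, your write-up is slightly more explicit than the paper's about the tail estimate, but the argument is the same.
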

\begin{proof}
	From \eqref{beta}, we calculate
	\begin{equation}
	-\beta'(s)=\frac{1}{d}+\frac{\pi^2}{2d^2}\sum_{n=-\infty}^\infty\frac{1}{\sinh^2(\frac{\pi}{2d}(s-2\pi n))}.
	\end{equation}
	Using the the fact that 
	\begin{equation}
	|\sinh(x)|\geq|x|,
	\end{equation}	
	we get
	\begin{equation}\label{estimate}
	\begin{split}
	-\beta'(s)&<\frac{1}{d}+\frac{2}{s^2}+\frac{\pi^2}{2d^2}\sum_{n\geq1}\frac{4d^2}{\pi^4(s-2\pi n)^2}+\frac{\pi^2}{2d^2}\sum_{n=-\infty}^{-1}\frac{4d^2}{\pi^4(s+2\pi n)^2}\\
	&=\frac{1}{d}+\frac{2}{s^2}+\frac{1}{2},
	\end{split}
	\end{equation}
	for $d>0$.
\end{proof}
In particular, we see that the derivative 
\begin{equation}
-\beta'(s)\text{ is of type }O(1/s^2)\text{ as }s\to0.
\end{equation}


\subsection{Bounds on the quadratic and cubic terms}	
Once again our starting point is the identity \eqref{f_diff}.  
In order to obtain bounds on the wave amplitude we will exploit the integral formulation. 
From \eqref{H} we see that
\begin{equation}\label{HF}
\mathcal{H}(F')(x)=-\frac{1}{2\pi}\textup{p.v.}\int_{-\pi}^{\pi}\beta'(s)\big(F(x)-F(x-s) \big)ds,\qquad x\in\R,
\end{equation}
for any smooth $2\pi$-periodic function $F:\R\to\R$. This enables us to rewrite the quadratic terms on the left side of \eqref{f}. We get
\begin{subequations}\label{left}
	\begin{equation}
	f\mathcal{H}(f')(x) = \textup{p.v.}\int_{-\pi}^{\pi}\frac{-\beta'(s)}{2\pi} f(x)\big(f(x)-f(x-s) \big)ds
	\end{equation}
and
	\begin{equation}
	\mathcal{H}(ff')(x)=\textup{p.v.}\int_{-\pi}^\pi\frac{-\beta'(s)}{4\pi}\big(f^2(x)-f^2(x-s) \big)ds,
	\end{equation}
	as well as the cubic expression 
	\begin{equation}\label{K}
	\mathcal{K}f(x):=\big(f^2\mathcal{H}f'+\mathcal{H}(f^2f')-2f\mathcal{H}(ff') \big)(x)=\int_{-\pi}^\pi\frac{-\beta'(s)}{6\pi}\big(f(x)-f(x-s) \big)^3ds,
	\end{equation}
	for all $x\in\R$.
\end{subequations}
Subtracting, we also have 
\begin{equation}
\Big(f\mathcal{H}f'-\mathcal{H}(ff') \Big)(x)=\int_{-\pi}^\pi\frac{-\beta'(s)}{4\pi}\big(f(x)-f(x-s)\big)^2ds.
\end{equation}
The bounds in the next two lemmas will serve as building blocks.

\begin{lemma}  \label{lem:bounds_CSV}
	For any smooth, strictly positive $2\pi$-periodic even function $f$ which is strictly increasing on $(0,\pi)$, we have 
	\begin{equation}
	\{f\mathcal{H}f'+\mathcal{H}(ff') \}\bigg|_{0}^\pi\geq 
	\frac{\beta(\pi/2)}{2\pi} \sA^2, 
	\end{equation} 
	where $\beta$ is defined in \eqref{beta}.	
\end{lemma}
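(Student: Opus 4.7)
My plan is to work from the integral representations in \eqref{left}, combine the two quadratic terms into a single factored integrand, fold the resulting integral onto $[0,\pi]$ using the evenness of $f$ and of $\beta'$, and then extract a lower bound by restricting the integration to a carefully chosen sub-interval. The algebraic identity $2u(u-v)+(u^2-v^2)=(u-v)(3u+v)$ collapses the two kernels into one:
\[
(f\mathcal{H}f'+\mathcal{H}(ff'))(x)=\frac{1}{4\pi}\int_{-\pi}^{\pi}(-\beta'(s))\bigl(f(x)-f(x-s)\bigr)\bigl(3f(x)+f(x-s)\bigr)\,ds.
\]
Evaluating at $x=\pi$ and $x=0$, subtracting, and folding via $f(\pi+s)=f(\pi-s)$, $f(-s)=f(s)$, and the evenness of $\beta'$ produces
\[
\{f\mathcal{H}f'+\mathcal{H}(ff')\}\Big|_{0}^{\pi}=\frac{1}{2\pi}\int_{0}^{\pi}(-\beta'(s))\,I(s)\,ds,
\]
where, writing $a:=f(0)$ and $c:=f(\pi)$ (so $\sA=c-a$),
\[
I(s):=\bigl(c-f(\pi-s)\bigr)\bigl(3c+f(\pi-s)\bigr)+\bigl(f(s)-a\bigr)\bigl(3a+f(s)\bigr).
\]

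The next step is to select the right interval of integration. Because $f$ is increasing on $(0,\pi)$, both summands in $I(s)$ are non-negative on $(0,\pi)$, and $-\beta'(s)>0$ there by Lemma~\ref{lem:beta}; hence discarding any sub-interval of $(0,\pi)$ gives a valid lower bound. The correct choice is $[\pi/2,\pi]$, because
\[
\int_{\pi/2}^{\pi}(-\beta'(s))\,ds=\beta(\tfrac{\pi}{2})-\beta(\pi)=\beta(\tfrac{\pi}{2}),
\]
using the oddness and $2\pi$-periodicity of $\beta$, which force $\beta(\pi)=0$. This matches precisely the constant appearing in the target bound, confirming the cut-off.

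What remains is a pointwise bound $I(s)\ge\sA^{2}$ for $s\in[\pi/2,\pi]$. On this range, setting $b:=f(\pi/2)$, the arguments satisfy $u:=f(\pi-s)\in[a,b]$ and $v:=f(s)\in[b,c]$. The function $J(u,v):=(c-u)(3c+u)+(v-a)(3a+v)$ has $\partial_{u}J=-2(c+u)\le0$ and $\partial_{v}J=2(a+v)\ge0$, so it is minimized on the rectangle $[a,b]\times[b,c]$ at the corner $(u,v)=(b,b)$, where a short expansion yields $J(b,b)=\sA(3c+3a-2b)\ge\sA\cdot c\ge\sA^{2}$, using $b\le c$ and $a\ge0$. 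Inserting this pointwise bound then reproduces the claimed inequality. I do not anticipate a serious technical obstacle; the only non-routine ingredients are spotting the factorization that merges the two kernels and recognizing that the constant $\beta(\pi/2)$ is a signal to cut the integration precisely at $s=\pi/2$.
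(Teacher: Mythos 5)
Your proof is correct; I checked the factorization $2u(u-v)+(u^2-v^2)=(u-v)(3u+v)$, the folding onto $[0,\pi]$ (valid since $-\beta'$, $f(\pi-\cdot)$ and $f(-\cdot)=f(\cdot)$ are all even), the identity $\int_{\pi/2}^{\pi}(-\beta'(s))\,ds=\beta(\pi/2)$ (using $\beta(\pi)=0$), and the corner minimization giving $J(b,b)=\sA(3c+3a-2b)\ge \sA c\ge \sA^2$; all steps hold, and after restricting to $[\pi/2,\pi]$ there is no issue with the principal value. The route is genuinely different in presentation from the paper's: the paper simply cites Lemma~2 of \cite{CSV2}, which bounds the two terms separately as $f\HH f'\big|_0^\pi\ge \frac{\beta(\pi/2)}{\pi}f(0)\sA$ and $\HH(ff')\big|_0^\pi\ge \frac{\beta(\pi/2)}{2\pi}\big(f^2(\pi)-f^2(0)\big)$ and then adds them, whereas you merge the two kernels into a single factored integrand and run one two-variable monotonicity argument on the rectangle $[a,b]\times[b,c]$. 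The underlying mechanism is the same (kernel representation, positivity and monotonicity of $\beta$, cutting the integral at $s=\pi/2$), but your version is self-contained, and the single minimization cleanly exhibits where the constant $\frac{\beta(\pi/2)}{2\pi}$ and the factor $\sA^2$ come from; the separate-term version of \cite{CSV2} yields the slightly stronger intermediate bound $\frac{\beta(\pi/2)}{\pi}\sA\{f(0)+\tfrac12(f(\pi)+f(0))\}$, but both give the stated inequality.
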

\begin{proof} 
This estimate on the quadratic terms occurred in the proof of Lemma 2 in \cite{CSV2}, 
where the left-hand side was denoted as $I+II$.  
Indeed, in that reference $I \ge \frac\beta \pi f(0)\sA$  and  $II \ge \frac\beta{2\pi} (f^2(\pi)-f^2(0))$.  
Hence $I+II \ge \frac\beta\pi \sA \{f(0)+\frac12(f(\pi)+f(0))\}$, which implies the estimate of the lemma.  
	
\end{proof}

\begin{lemma}\label{lem:bounds_K}
For any smooth positive $2\pi$-periodic even function $f$ that is strictly increasing in 
the interval $(0,\pi)$, we have
	\begin{equation}
	\mathcal{K} f\Big|_0^\pi  
	<\frac{2}{3}\bigg(\frac{1}{d}+\frac{1}{2}\bigg)  \sA^3  
	+   \frac{8 N }{3\pi} \sA^2 ,  
	\end{equation}
	where $\mathcal{K}$ is defined  in \eqref{K}.and $N=\sup_x|\eta'(x)|$ is the maximum slope.   
\end{lemma}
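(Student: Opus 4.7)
The plan is to start from the convolution representation~\eqref{K}, which after evaluation at $x=\pi$ and $x=0$ and use of the evenness of $f$ (so that $f(-s)=f(s)$) yields
$$\mathcal{K}f\big|_0^\pi = \frac{1}{6\pi}\int_{-\pi}^\pi (-\beta'(s))\bigl[a(s)^3+b(s)^3\bigr]\,ds,$$
where $a(s):=f(\pi)-f(\pi-s)$ and $b(s):=f(s)-f(0)$. By the nodal property \eqref{nodal} and evenness, both $a$ and $b$ are non-negative even functions on $[-\pi,\pi]$ that vanish at one endpoint and satisfy the global bound $a,b\le \sA$. I will combine this global bound with the pointwise slope bound $a(s),b(s)\le N|s|$, which follows at once from the mean value theorem applied to $f'=-\eta_0'$ (so that $|f'|\le N$).

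Next, apply Lemma~\ref{lem:bounds_beta_der} to split the factor $-\beta'(s)$ into its two natural pieces, $\tfrac{1}{d}+\tfrac12$ and $\tfrac{2}{s^2}$. For the regular piece, use $a^3+b^3\le 2\sA^3$ uniformly; integration over $(-\pi,\pi)$ produces exactly the target cubic term $\tfrac{2}{3}(\tfrac{1}{d}+\tfrac12)\sA^3$.

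For the singular $2/s^2$ piece, the crucial idea is to interpolate between the two bounds on $a,b$ by splitting the integral at the threshold $|s|=\sA/N$. Note that $\sA/N\le \pi$ always, since the mean value theorem applied to $f$ on $[0,\pi]$ yields some $\xi$ with $f'(\xi)=\sA/\pi$, hence $N\ge\sA/\pi$. On the inner region $|s|<\sA/N$, use $a,b\le N|s|$, so that $(a^3+b^3)/s^2\le 2N^3|s|$ and the integral is $\le 2N^3(\sA/N)^2=2N\sA^2$. On the outer region $\sA/N<|s|<\pi$, use $a,b\le \sA$, so that $(a^3+b^3)/s^2\le 2\sA^3/s^2$ and the integral is bounded by $4\sA^3\cdot(N/\sA)=4N\sA^2$. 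Adding, the singular piece contributes at most $\tfrac{1}{3\pi}\cdot 6N\sA^2=\tfrac{2N}{\pi}\sA^2<\tfrac{8N}{3\pi}\sA^2$, which combines with the regular piece to give the stated bound.

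The main obstacle is the borderline non-integrable $1/s^2$ singularity of the kernel bound: using $a\le \sA$ on its own makes the singular integral logarithmically divergent, while using $a\le N|s|$ on its own inflates the $N$-dependence to $N^3$, producing a term that cannot be absorbed. The choice of splitting at $|s|=\sA/N$, exactly where the two estimates $N|s|$ and $\sA$ cross, is what makes the trade-off work and produces the linear-in-$N$ bound required by the lemma.
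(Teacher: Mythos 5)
Your proposal is correct and follows essentially the same route as the paper: the convolution representation of $\KK$, the kernel bound of Lemma~\ref{lem:bounds_beta_der} split into its regular and singular parts, the two pointwise bounds $a,b\le\sA$ and $a,b\le N|s|$, and the splitting of the singular integral at the crossover point $|s|=\sA/N$ (the paper introduces a free parameter $\delta$ and then optimizes to the same value). Your version even supplies the small check $\sA/N\le\pi$ that the paper leaves implicit, and lands on the slightly sharper constant $\tfrac{2N}{\pi}\sA^2$ versus the paper's $\tfrac{8N}{3\pi}\sA^2$.
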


\begin{proof}
	From Lemma~\ref{lem:bounds_beta_der} we can approximate $\mathcal{K}$ to get
	\begin{equation}
	\mathcal{K}f\Big|_0^\pi <  \frac{1}{6\pi}
	\int_{-\pi}^\pi  \bigg(\frac{1}{ d}+\frac{1}{2}+\frac2{s^2}\bigg)\  \Big\{ (f(\pi)-f(\pi-s))^3  - (f(0)-f(-s))^3 \Big\} ds .
	\end{equation}  
The integrand is an even function of $s$.  
Although the bracketed expression 
	$\mathcal{I}(s)  := (f(\pi)-f(\pi-s))^3 + (f(s)-f(0))^3 $.  
 is clearly less than $2\sA^3$, it does not take into account the singularity at $s=0$. 
 So we begin by splitting the integral  as 
	\begin{equation}\label{K1}
	\begin{split}
	\mathcal{K}f\Big|_0^\pi &< \frac{1}{3\pi}
	\int_{0}^\pi  \bigg(\frac{1}{ d}+\frac{1}{2}\bigg)\  \Big\{ (f(\pi)-f(\pi-s))^3  - (f(0)-f(-s))^3 \Big\} ds\\
	&\qquad+\frac{2}{3\pi}
	\int_{0}^\pi  \frac1{s^2}\  \Big\{ (f(\pi)-f(\pi-s))^3  - (f(0)-f(-s))^3 \Big\} ds\\
	&=:\mathcal{J}_1+\mathcal{J}_2.
	\end{split}
	\end{equation}
From these considerations we very easily bound
	\begin{equation}
	\mathcal{J}_1\leq\frac{2}{3}\bigg(\frac{1}{d}+\frac{1}{2}\bigg)\sA^3.
	\end{equation}
In order to handle $\mathcal{J}_2$, 
we will make use of the behavior of  two of the three factors in the cubic expression \eqref{K1} at $s=0$. Indeed, since $f'(0)=f'(\pi)=0$, a Taylor expansion with remainder gives us 
	\begin{equation}\label{K2}
	f(s)-f(0)     \le Ns \quad \text{ for } s\in (0,\delta),
	\end{equation} 
	and similarly for $f(\pi)-f(\pi-s)$.  
We proceed to break the interval of integration in \eqref{K1} into $(0,\delta)\cup(\pi-\delta,\pi)$. 
Using the simple estimate \eqref{K2} on two of the factors in $\mathcal I(s)$, we get 
	$\mathcal I(s)\le 2(Ns)^2\sA$.  
	The $s^2$ singularity cancels and 
	\begin{equation}
	\int_0^\delta\frac{\mathcal{I}(s)}{s^2} \,ds   \le 2N^2\sA\delta.  
	\end{equation}  
The remaining integral is easily bounded as 
	\begin{equation}
	\int_\delta^\pi \frac{\mathcal{I}(s)}{s^2} \,ds \le  2\int_\delta^\pi \frac{ds}{s^2} \sA^3  
	= \frac2\delta \sA^3 . 
	\end{equation} 
	Combining, we thus obtain the estimate  
	
	\begin{equation}\label{K_bounds}
	\mathcal{K} f\Big|_0^\pi  <\frac{2}{3}\bigg(\frac{1}{d}+\frac{1}{2}+\frac{2}{\pi\delta}\bigg)\sA ^3  
	+  \frac{4}{3\pi}N^2\delta \sA . 
	\end{equation}
The parameter $\delta$ is available to be chosen.	
In order to minimize the right side of \eqref{K_bounds}, we choose $ \delta=\sA / N $, which yields the upper bound 
	\begin{equation}
	\mathcal{K} f\Big|_0^\pi  <\frac{2}{3}\bigg(\frac{1}{d}+\frac{1}{2}\bigg)\sA ^3  
	+  \frac{8}{3\pi} N\sA^2.  
	\end{equation} 
	This completes the proof of the lemma.  
\end{proof}

\subsection{Proof of the main result}\label{sect:bounds}
We are now in a position to prove Theorem \ref{thm:adverse}. We begin with the following theorem, in which we control the slope of the wave.

\begin{theorem}
	Given a solution that belongs to $\mathcal{C}$, let the conformal depth $d>0$ and the bound $N>0$ on the slope be fixed. 
Then the amplitude is bounded as follows.  
	\begin{equation} 
	\sA  \le  \frac12(E-D) - \frac12 \sqrt{(E-D)^2-4F}  \quad \text{ provided } \quad E>D \quad  \text{ and } \quad 4F<(E-D)^2,  
	\end{equation}
	where $E,F$ and $D$ are provided below in \eqref{bounds4}.
\end{theorem}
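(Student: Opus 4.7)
The plan is to derive a single quadratic polynomial inequality for $\sA$ directly from the identity \eqref{f_diff}, and then to close the argument by a continuation along the bifurcation curve $\mathcal{C}$. First I would bound the left-hand side of \eqref{f_diff} from above. Since $[f^2]\ge 0$, the scalar coefficient of $f\bigl|_0^\pi=\sA$ is bounded by
\[
D_0 \;:=\; 1 - \frac{m\gamma}{dg} + \frac{\gamma^2 Q^2}{8 d g^3},
\]
while the remaining quantity $-\frac{\gamma^2}{2g}\bigl(f^2(\pi)-f^2(0)\bigr)$ is nonpositive by the monotonicity \eqref{nodal}. Hence the left-hand side of \eqref{f_diff} is at most $D_0\, \sA$.

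Next, I would move the cubic term $-\frac{\gamma^2}{2g}\mathcal{K}f\bigl|_0^\pi$ from the right-hand side of \eqref{f_diff} across the equality, and apply the two building-block estimates of Lemma \ref{lem:bounds_CSV} and Lemma \ref{lem:bounds_K}. Combining the upper bound on the LHS with the lower bound on the quadratic part of the RHS and the upper bound on $\mathcal{K}f\bigl|_0^\pi$, one obtains
\[
\frac{\beta(\pi/2)}{2\pi}\,\sA^2 \;\le\; D_0\,\sA \;+\; \frac{\gamma^2}{2g}\left[\tfrac{2}{3}\Bigl(\tfrac{1}{d}+\tfrac{1}{2}\Bigr)\sA^3 \;+\; \tfrac{8N}{3\pi}\sA^2\right].
\]
Dividing through by $\sA>0$ and then by the coefficient $\gamma^2(1/d+1/2)/(3g)$ of $\sA^2$, one arrives at a normalized quadratic inequality
\[
\sA^2 \;-\; (E-D)\,\sA \;+\; F \;\ge\; 0,
\]
where the constant $E$ encodes the Hilbert-transform lower bound $\beta(\pi/2)/(2\pi)$, the constant $D$ captures the harmful contribution $4N\gamma^2/(3\pi g)$ extracted from $\mathcal{K}f$, and $F$ is proportional to $D_0$. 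These explicit formulas are what will be recorded in \eqref{bounds4}.

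Finally, the theorem is obtained by a continuity argument along $\mathcal{C}$. Under the hypotheses $E>D$ and $(E-D)^2>4F$, the polynomial $P(t)=t^2-(E-D)t+F$ has two positive real roots $0<r_1<r_2$, with $r_1=\tfrac12(E-D)-\tfrac12\sqrt{(E-D)^2-4F}$. Because $\sA(s)$ depends continuously on the parameter $s$ along $\mathcal{C}$ with $\sA(0)=0<r_1$, and because the inequality $P(\sA(s))\ge 0$ holds for every $s$ by the previous step, $\sA(s)$ cannot enter the forbidden interval $(r_1,r_2)$ without violating the inequality. Consequently $\sA(s)\le r_1$ for every $s$, which is exactly the claimed bound. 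The principal technical hurdle is the discriminant condition $(E-D)^2>4F$ (together with $E>D$): these are precisely what force the smallness assumptions on $\gamma$, $N\gamma^2$, and the auxiliary combinations of $Q, m, g, d$ appearing in Theorem \ref{thm:adverse}, because without such smallness the cubic contribution from $\mathcal{K}f$ would overwhelm the quadratic lower bound provided by Lemma \ref{lem:bounds_CSV}.
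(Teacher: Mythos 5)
Your proposal is correct and follows essentially the same route as the paper: the identity \eqref{f_diff} combined with Lemmas \ref{lem:bounds_CSV} and \ref{lem:bounds_K} yields a quadratic inequality in $\sA$, and the connectedness of $\mathcal{C}$ starting from the zero-amplitude laminar flow forces $\sA$ to stay below the smaller root. The only (harmless) difference is bookkeeping: you discard the terms $-\frac{\gamma^2}{2g}f^2\big|_0^\pi$ and $-\frac{\gamma^2[f^2]}{2dg}$, whereas the paper converts the former into the extra contribution $\frac{3}{2(\frac1d+\frac12)}$ in $E$ and retains the latter in $F$, so your constants differ marginally (and are slightly weaker) from those recorded in \eqref{bounds4}.
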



\begin{proof}
	For convenience we copy the key identity from \eqref{f_diff}, as follows:    
	\begin{equation}
	\left\{ 1-\frac{m\gamma}{dg}+\frac{\gamma^2 Q^2}{8dg^3}-\frac{\gamma^2[f^2]}{2dg} \right\}  f\bigg|_{0}^\pi
	-\frac{\gamma^2}{2g}f^2\bigg|_{0}^\pi
	=  \{f\HH f'+\HH (ff') \}\bigg|_{0}^\pi - \frac{\gamma^2}{2g} \KK f \bigg|_{0}^\pi.
	\end{equation}
Recalling  Lemmas~\ref{lem:bounds_CSV} and \ref{lem:bounds_K} 
and the notation $\sA =f(\pi)-f(0)$, we obtain  
	\begin{equation}
	\begin{split}
	\left(1-\frac{m\gamma}{dg}+\frac{\gamma^2 Q^2}{8dg^3}-\frac{\gamma^2[f^2]}{2dg} \right)\sA  
	&\geq\bigg(\frac{\gamma^2}{2g}+\frac{\beta(\pi/2)}{2\pi}\bigg)\sA ^2
	-\frac{\gamma^2}{3g}\bigg(\frac{1}{d}+\frac{1}{2}\bigg)\sA ^3 
	-  \frac{4\gamma^2}{3\pi g} N\sA^2.   
	\end{split} 
	\end{equation}
	Dividing by $\sA $ yields
	\begin{equation}\label{bounds3}
	\begin{split}
	1-\frac{m\gamma}{dg}+\frac{\gamma^2 Q^2}{8dg^3}-\frac{\gamma^2[f^2]}{2dg} 
	\geq\bigg(\frac{\gamma^2}{2g}+\frac{\beta(\pi/2)}{2\pi}\bigg)\sA 
	-\frac{\gamma^2}{3g}\bigg(\frac{1}{d}+\frac{1}{2}\bigg)\sA ^2
	-  \frac{4\gamma^2}{3\pi g} N\sA.   
	\end{split}
	\end{equation}
Dividing by the leading coefficient of $\sA$, we obtain the quadratic inequality
	\begin{equation}\label{quadratic}
	\sA ^2+(D-E)\sA +F\geq0,  
	\end{equation}
where 
	\begin{equation}\label{bounds4}
	\begin{split}
	D&:=    \frac{4N}{\pi(\frac1d+\frac12)},      \\    
	E&:       =\frac{3\beta(\pi/2)+3A\pi}{2A\pi(\frac{1}{d}+\frac{1}{2})}
	= \frac{3}{2(\frac{1}{d}+\frac{1}{2})} 
	+ \frac{3g\beta(\frac\pi 2)} {2\pi\gamma^2 (\frac{1}{d}+\frac{1}{2})} ,\\ 
	F&:=\frac{3g}{\gamma^2(\frac{1}{d}+\frac{1}{2})} 
	\bigg(1-\frac{m\gamma}{dg}+\frac{\gamma^2 Q^2}{8dg^3}-\frac{\gamma^2[f^2]}{2dg} \bigg)= \frac{3g}{\frac{1}{d}+\frac12} \left\{ \frac1{\gamma^2} - \frac{m}{\gamma gd} 
	+ \frac1{2dg}\left( \frac{Q^2}{4g^2}-[f^2]\right) \right\} .  
	\end{split}
	\end{equation}
Clearly, both $D$ and $E$ are positive. 
Note that both $E$ and $F$ have order $O(\gamma^{-2})$ as $\gamma\to 0$.  
Note also that $f\le \frac{Q}{2g}$, so that $ \frac{Q^2}{4g^2} > [f^2] >0$. 
{\it Here we are assuming the adverse case $\gamma>0$ and are using the fact that $m<0$, so that $F$ is positive as well. } 
	
Now we make use of the quadratic inequality \eqref{quadratic}.  
	Desiring the parabola in \eqref{quadratic} to cross the horizontal axis,  we require  
	
	\begin{equation}
	E>D \quad  \text{ and } \quad 4F<(E-D)^2. 
	\end{equation}
From \eqref{quadratic} it follows that 
	\begin{equation}   \label{A bound2}
	\sA  \le  \frac12(E-D) - \frac12 \sqrt{(E-D)^2-4F},   
	\end{equation}
where the right side is the smaller root of the quadratic.    Now the curve $\mathcal C$ of solutions includes the trivial wave that has zero amplitude.  The key fact is that $\mathcal C$ is a connected set.  Therefore all the solutions in it must indeed satisfy \eqref{A bound2}.  
\end{proof} 

In order to exhibit a more explicit bound, we now make some specific choices.   

\begin{corollary}
	Provided the bounds
	\begin{equation} \label{N1}
	\gamma<\frac{gd}{|m|}, \quad \text{ and }\quad \gamma^2<\frac{8dg^3}{Q^2}, 
	\end{equation} 
	and 
	\begin{equation} \label{N2} 
	N\gamma^2 < \frac{\beta(\frac\pi{2}) g}{8}   \quad \text{ and } 
	\gamma^2 < \frac{1}{\frac12 + \frac1d}  \frac{g\beta^2(\frac\pi{2})} {77\pi^2},
	\end{equation}
	hold, then the amplitude is bounded by
	\begin{equation}
	\mathscr A<\frac{12\pi}{\beta(\frac\pi2)}.
	\end{equation} 	
\end{corollary}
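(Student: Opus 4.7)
The plan is to substitute the hypotheses \eqref{N1}--\eqref{N2} directly into the explicit formulas for $D$, $E$, $F$ from \eqref{bounds4}, verify the two assumptions $E>D$ and $(E-D)^2 > 4F$ of the preceding theorem, and then bound the smaller root $\tfrac12(E-D) - \tfrac12\sqrt{(E-D)^2 - 4F}$ sharply enough to reach $12\pi/\beta(\pi/2)$. The argument is essentially bookkeeping once the tradeoffs among the four hypotheses are set up correctly.

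First I would upper-bound $F$. In the adverse regime $\gamma>0$, $m<0$, and $[f^2]\ge 0$, so the $-[f^2]$ contribution in the formula for $F$ can simply be dropped, and each of the three positive summands inside the braces becomes at most $1/\gamma^2$. Indeed, the inequality $\gamma < gd/|m|$ from \eqref{N1} is exactly $|m|/(\gamma g d) < 1/\gamma^2$, and $\gamma^2 < 8dg^3/Q^2$ from \eqref{N1} is exactly $Q^2/(8dg^3) < 1/\gamma^2$. This yields $F < 9g/[\gamma^2(\tfrac1d+\tfrac12)]$. Next, dropping the positive constant term of $E$ and using $N\gamma^2 < \beta(\pi/2)g/8$ from \eqref{N2} to bound $D$, one finds that $D$ consumes at most half of the dominant $\gamma^{-2}$ term of $E$, so that
\[
E - D \;>\; \frac{g\beta(\pi/2)}{\pi\gamma^2(\tfrac1d+\tfrac12)}.
\]

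Combining these two estimates gives $4F/(E-D)^2 < 36\pi^2\gamma^2(\tfrac1d+\tfrac12)/[g\beta(\pi/2)^2]$, and the second inequality of \eqref{N2} forces the right-hand side to be at most $36/77$, which is less than $3/4$. Thus $(E-D)^2 > 4F$ (so the hypotheses of the preceding theorem hold), and moreover $\sqrt{(E-D)^2 - 4F} > (E-D)/2$. Rewriting the smaller root via the standard identity
\[
\tfrac12(E-D) - \tfrac12\sqrt{(E-D)^2 - 4F} \;=\; \frac{2F}{(E-D) + \sqrt{(E-D)^2 - 4F}},
\]
the previous inequality yields $\sA < 2F/[\tfrac32(E-D)] = 4F/[3(E-D)]$. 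Together with the ratio $F/(E-D) < 9\pi/\beta(\pi/2)$ extracted from the preceding paragraph, this gives $\sA < 12\pi/\beta(\pi/2)$, as claimed.

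The only mildly subtle point, and the reason the four hypotheses are calibrated the way they are, is the factor $\tfrac12$ in the square-root bound. The naive estimate $\sqrt{(E-D)^2 - 4F} \ge 0$ would yield only $\sA < 2F/(E-D) < 18\pi/\beta(\pi/2)$, which is too weak by a factor of $3/2$. Condition \eqref{N2} is designed precisely so that $4F \le (3/4)(E-D)^2$, exactly the threshold at which $\sqrt{(E-D)^2 - 4F} \ge (E-D)/2$, and this is what replaces $18\pi$ by the advertised $12\pi$. Beyond this single observation there is no conceptual obstacle; all remaining steps are routine arithmetic.
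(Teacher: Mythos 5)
Your proof is correct, and the main quantitative inputs are exactly those of the paper: you derive $F<9g/[\gamma^2(\tfrac1d+\tfrac12)]$ from \eqref{N1}, $E-D>g\beta(\tfrac\pi2)/[\pi\gamma^2(\tfrac1d+\tfrac12)]$ from the slope condition in \eqref{N2}, and the ratio $4F/(E-D)^2<36/77$ from the last condition. Where you genuinely diverge is the final step. The paper expands $\sqrt{(E-D)^2-4F}$ by a second-order Taylor formula with Lagrange remainder, obtaining $\sA\le F/(E-D)+F^2\Xi^{-3/2}$ with $(E-D)^2-4F<\Xi<(E-D)^2$, and then shows the remainder term is $O(\gamma^2)$ and at most $3\pi/\beta(\tfrac\pi2)$ under \eqref{N2} (this is in fact where the constant $77>27\sqrt{8}$ originates). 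You instead rationalize the smaller root as $2F/[(E-D)+\sqrt{(E-D)^2-4F}]$ and use $4F\le\tfrac34(E-D)^2\Rightarrow\sqrt{(E-D)^2-4F}\ge\tfrac12(E-D)$ to get $\sA<\tfrac43\,F/(E-D)<12\pi/\beta(\tfrac\pi2)$ in one line. Your route is cleaner and avoids the auxiliary quantity $\Xi$ entirely; the paper's route makes the asymptotic structure $\sA<9\pi/\beta(\tfrac\pi2)+O(\gamma^2)$ explicit. One cosmetic remark: your phrase that $D$ ``consumes at most half'' of the dominant term of $E$ is loose --- under \eqref{N2} one has $D<\tfrac13$ of that term, and it is this factor $\tfrac13$ that produces the displayed lower bound $E-D>g\beta(\tfrac\pi2)/[\pi\gamma^2(\tfrac1d+\tfrac12)]$; the displayed inequality is what you actually use, so nothing breaks.
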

\begin{proof}
We will require an explicit estimate of the right side of \eqref{A bound2}.  
By the Taylor expansion of the square root with remainder, 
we have 
	\begin{equation}   \label{Taylor} 
	\sqrt{(E-D)^2-4F} = (E-D)+\frac12(E-D)^{-1}(-4F)  -  \frac18 \Xi^{-3/2} (-4F)^2  
	= (E-D) - \frac{2F}{E-D} - 2F^2 \Xi^{-3/2}, 
	\end{equation}
where $(E-D)^2-4F < \Xi < (E-D)^2$.  So from \eqref{A bound2} we have 
\bq    \label{A bound3} 
\sA  \le \frac{F}{E-D}  +  F^2 \Xi^{-\frac32}.  \eq 
Choose $\ga$ so small that 
	\begin{equation}  \label{NmQ ineq}
	N\ga^2 <  \frac{g\beta(\frac\pi{2})}{8},   \qquad |m|\ga < gd, \qquad  \ga^2 Q^2 < 8g^3d . 
	\end{equation} 
Then the quantities $D,E,F$ given above satisfy 
	\begin{equation}   \label{DEFineq}
	\left(\frac1d+\frac12\right) F < \frac{9g}{\ga^2}\ ,  \qquad  
	\left(\frac1d+\frac12\right) (E-D)  >  \frac{g\beta(\frac\pi{2})}{\pi\ga^2} .
	\end{equation}
So 
\[
	\Xi  >  (E-D)^2-4F  >  \frac1{\left(\frac1d+\frac12\right)^2} \frac{g^2\beta(\frac\pi{2})^2} {\pi^2\ga^4}  -  \frac1{\left(\frac1d+\frac12\right)} \left( \frac{36g}{\ga^2} \right)   
	> \frac12 \frac1{\left(\frac1d+\frac12\right)^2} \frac{g^2\beta(\frac\pi{2})^2} {\pi^2\ga^4}, 
	\] 
	the last inequality being true provided   
	\begin{equation}  \label{gamma ineq1} 
	\ga^2 < \frac1{77}\frac1{\left(\frac1d+\frac12\right)} \frac{g\beta(\frac\pi2)^2} {\pi^2} . 
	\end{equation}
	So the remainder in \eqref{Taylor} is bounded above:  
	\[
	F^2 \Xi^{-3/2}  <  \left( \frac{9g} {\frac1d+\frac12} \frac1{\ga^2} \right)^2 
	\left\{  \frac12 \frac1{\left(\frac1d+\frac12\right)^2} 
	\frac{g^2\beta(\frac\pi{2})^2} {\pi^2\ga^4}  \right\}^{-\frac32}  
	=  \ga^2 81\sqrt{8} \frac{\pi^3}{g\beta(\frac\pi{2})^3}  \left(\frac1d+\frac12\right).  
	\]
	By \eqref{DEFineq} the other term in \eqref{Taylor} is estimated as 
	\[ 
	\frac{F}{E-D}  <  \frac{9\pi }{\beta(\frac\pi{2})}.  
	\]
	Therefore from \eqref{A bound3} we have 
	\[
	\sA  <   \frac{F}{E-D}  +  F^2 \Xi^{-\frac32}  
	<  \frac{9\pi}{\beta(\frac\pi{2})}   +  \ga^2 81\sqrt{8} \frac{\pi^3}{g\beta(\frac\pi{2})^3}  \left(\frac1d+\frac12\right) 
	<  \frac{12\pi }{\beta(\frac\pi{2})}
	\]
	provided \eqref{gamma ineq1} is satisfied.   
	Altogether, the smallness conditions on $\ga$ are \eqref{NmQ ineq} and \eqref{gamma ineq1}.  
\end{proof}

\subsection{Alternative proof}  
We now provide an alternative proof for Theorem~\ref{thm:adverse}, in which, rather than controlling the slope of the wave, we place bounds on its convexity.
We continue the estimates in Subsection 4.2 with modifications.  

Handling $\mathcal{J}_2$ differently from before, we make use of the behavior of only one of the three factors in the product \eqref{K1} at $s=0$. Since $f'(0)=f'(\pi)=0$, a Taylor expansion with remainder gives us 
\begin{equation}\label{K2N}
f(s)-f(0) = \frac12 f''(s^*) s^2 \le \frac12 M s^2 \quad \text{ for } s\in (0,\delta),
\end{equation} 
and similarly for $f(\pi)-f(\pi-s)$.  Here we used the fundamental assumption that
\begin{equation}
\sup_I |f''(s)| \le M \quad \text{where } I = (0,\delta) \cup (\pi-\delta,\pi),
\end{equation}   
where $M$ is a fixed large number and $\delta$ is a fixed number.  

As before, we break the interval of integration in \eqref{K1} into $(0,\delta)\cup(\pi-\delta,\pi)$. Using the estimate \eqref{K2N} on one of the factors, the $s^2$ singularity cancels, and we get  
\begin{equation}
\int_0^\delta\frac{\mathcal{I}(s)}{s^2} \,ds  \le  M\delta \ \{f(\pi)-f(0)\}^2.
\end{equation}  
Moreover, 
\begin{equation}
\int_\delta^\pi \frac{\mathcal{I}(s)}{s^2} \,ds \le  2\int_\delta^\pi \frac{ds}{s^2} \{f(\pi)-f(0)\}^3  < \frac1\delta \{f(\pi)-f(0)\}^3 . 
\end{equation} 
Combining, we thus obtain the estimate

\begin{equation}
\mathcal{K} f\Big|_0^\pi  <\frac{2}{3}\bigg(\frac{1}{d}+\frac{1}{2}+\frac{2}{\pi\delta}\bigg)\sA^3+  \frac{2M}{3\pi}\delta \sA^2 . 
\end{equation}  
In order to minimize this expression, we choose $\delta=\sqrt{\sA /M}$, which yields the bound 
\begin{equation}
\mathcal{K} f\Big|_0^\pi  <\frac{2}{3}\bigg(\frac{1}{d}+\frac{1}{2}\bigg)\sA^3  
+  \frac{8\sqrt{M}}{3\pi} \sA^{5/2} 
\end{equation} 
We are now ready to give the alternative proof  that involves $M$.  

\begin{theorem}
	Given a solution of $\mathcal{C}$, let the conformal depth $d>0$ and the convexity bound $M>0$ be fixed. 
	
(i)  Then for any $\ep>0$ there exists a small $\gamma_0>0$ such that for all $\gamma<\gamma_0$ we obtain the amplitude bounds
	\begin{equation}
	\mathscr A <\frac{2(1+\ep)\pi}{\beta(\pi/2)}\big(1+O(\gamma)\big)
	\end{equation}
	for some $\ep>0$, for water wave solutions on the curve $\mathcal{C}$. 
	
(ii) More specifically, given a solution of $\mathcal{C}$ with depth $d$, $Q$, and $\gamma$, we have 
	\begin{equation} 
	\sA <   \frac{2(1+\ep)\pi}{\beta(\frac\pi{2})}
	\left\{ 1 + \frac{|m|\gamma}{ gd} 
	+ \frac{\gamma^2 Q^2}{8dg^3} \right\} \quad \text{ provided } \quad P\bigg(\frac{F}{E}(1+\ep)\bigg)<0,  
	\end{equation}
	where the polynomial $P$ is given below in \eqref{PY} and the coefficients $F$ and $E$ are given in \eqref{bounds4N}.
\end{theorem}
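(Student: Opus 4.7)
The plan is to closely parallel the preceding adverse-case argument, substituting the new $\sqrt M$-based upper bound on $\mathcal K f\big|_0^\pi$ in place of the $N$-based bound, and then to interpret the resulting inequality as a continuation constraint along $\mathcal C$. First, starting from the identity \eqref{f_diff}, I would apply the lower bound $\{f\mathcal H f'+\mathcal H(ff')\}\big|_0^\pi\ge \frac{\beta(\pi/2)}{2\pi}\sA^2$ from Lemma~\ref{lem:bounds_CSV}, the trivial pointwise bound $f^2\big|_0^\pi=(f(\pi)+f(0))\,\sA\ge \sA^2$ (which uses $f>0$), and the $M$-based bound $\mathcal K f\big|_0^\pi < \tfrac{2}{3}(\tfrac{1}{d}+\tfrac{1}{2})\sA^3+\tfrac{8\sqrt M}{3\pi}\sA^{5/2}$ proved immediately above the theorem statement. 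After dividing by $\sA$ and normalizing by the positive leading coefficient $\tfrac{\gamma^2}{3g}(\tfrac{1}{d}+\tfrac{1}{2})$, this should produce an inequality of the form
\[
P(\sA) := \sA^2 + D_M\,\sA^{3/2} - E\,\sA + F \;\ge\; 0,
\]
where $D_M$ is a positive multiple of $\sqrt M$ and $E,F$ coincide structurally with the constants in \eqref{bounds4} (with the $N$-term now absent from $E$ and replaced by the $D_M\sA^{3/2}$ summand). These are the coefficients to be recorded in \eqref{bounds4N}.

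Next I would compute $\tfrac{F}{E}=\tfrac{2\pi C_1}{\pi\gamma^2/g+\beta(\pi/2)}$ with $C_1:=1-\frac{m\gamma}{dg}+\frac{\gamma^2Q^2}{8dg^3}-\frac{\gamma^2[f^2]}{2dg}$, which is bounded above by $\tfrac{2\pi}{\beta(\pi/2)}\{1+\tfrac{|m|\gamma}{gd}+\tfrac{\gamma^2 Q^2}{8dg^3}\}$ (using $m<0$ and $[f^2]\ge 0$) and tends to $\tfrac{2\pi}{\beta(\pi/2)}$ as $\gamma\to 0$. Assuming $C_1>0$ (automatic under the smallness conditions encoded in the theorem's hypothesis), we have $P(0)=F>0$.

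The key step is then a connectedness argument along the bifurcation curve $\mathcal C$: the amplitude $\sA$ varies continuously along $\mathcal C$, begins at $\sA=0$ at the laminar flow, and the polynomial inequality $P(\sA)\ge 0$ must persist throughout. Setting $Y^\ast:=\tfrac{F}{E}(1+\ep)$, the hypothesis $P(Y^\ast)<0$ forbids the continuous function $\sA$ from crossing $Y^\ast$ as one traverses $\mathcal C$, so $\sA<Y^\ast$. Combined with the upper bound on $F/E$ above, this proves (ii). For (i), rewriting $P(Y^\ast)=(Y^\ast)^2+D_M(Y^\ast)^{3/2}-\ep F$ and using $Y^\ast=O(1)$, $D_M=O(1)$, $F=O(\gamma^{-2})$ as $\gamma\to 0$, one sees that $P(Y^\ast)<0$ is automatic once $\gamma$ is small enough, delivering the asymptotic form $\sA<\tfrac{2(1+\ep)\pi}{\beta(\pi/2)}(1+O(\gamma))$.

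The main obstacle is not conceptual but bookkeeping: the polynomial $P$ contains the non-integer power $\sA^{3/2}$, so the clean quadratic formula of the preceding theorem is unavailable. The continuation argument sidesteps this by requiring only the \emph{sign} of $P$ at $Y^\ast$, which reduces the task to checking that $(Y^\ast)^2+D_M(Y^\ast)^{3/2}$ is dominated by $\ep F$ --- routine since $F\sim\gamma^{-2}$ while the other quantities are $O(1)$.
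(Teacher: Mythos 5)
Your proposal is correct and follows essentially the same route as the paper: the same two lemmas feed into \eqref{f_diff}, the same normalization produces $\sA^2+D_M\sA^{3/2}-E\sA+F\ge 0$, and the same connectedness/continuation argument along $\mathcal C$ with the test value $\tfrac FE(1+\ep)$ yields both (i) and (ii). The only differences are cosmetic --- you keep the variable $\sA$ with a half-integer power where the paper substitutes $Y=\sqrt{\sA}$ to get the quartic $P(Y)$ of \eqref{PY}, and you make explicit the step $f^2\big|_0^\pi\ge\sA^2$ that the paper uses silently.
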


\begin{proof}
For convenience we copy the key identity from \eqref{f_diff}, as follows:    
	\begin{equation}
	\left\{ 1-\frac{m\gamma}{dg}+\frac{\gamma^2 Q^2}{8dg^3}-\frac{\gamma^2[f^2]}{2dg}  \right\}  f\bigg|_{0}^\pi
	-\frac{\gamma^2}{2g}f^2\bigg|_{0}^\pi
	=  \{f\HH f'+\HH (ff') \}\bigg|_{0}^\pi - \frac{\gamma^2}{2g} \KK f \bigg|_{0}^\pi.
	\end{equation}
By  Lemmas~\ref{lem:bounds_CSV} and \ref{lem:bounds_K}, we get
	\begin{equation}
	\begin{split}
	\bigg(1-\frac{m\gamma}{dg}+\frac{\gamma^2 Q^2}{8dg^3}-\frac{\gamma^2[f^2]}{2dg}  \bigg)\sA  
	&\geq\bigg(\frac{\gamma^2}{2g}+\frac{\beta(\pi/2)}{2\pi}\bigg)\sA ^2
	-\frac{\gamma^2}{3g}\bigg(\frac{1}{d}+\frac{1}{2}\bigg)\sA ^3 
	-  \frac{4\gamma^2\sqrt{M}}{3\pi g} \sA ^{5/2}.
	\end{split} \sA 
	\end{equation}
Dividing by $\sA $ yields
	\begin{equation}\label{bounds3N}
	\begin{split}
	1-\frac{m\gamma}{dg}+\frac{\gamma^2 Q^2}{8dg^3}-\frac{\gamma^2[f^2]}{2dg}  
	\geq\bigg(\frac{\gamma^2}{2g}+\frac{\beta(\pi/2)}{2\pi}\bigg)\sA 
	-\frac{\gamma^2}{3g}\bigg(\frac{1}{d}+\frac{1}{2}\bigg)\sA ^2
	-  \frac{4\gamma^2\sqrt{M}}{3\pi g} \sA ^{3/2}.
	\end{split}
	\end{equation}
We  divide by the leading coefficient to obtain the inequality
	\begin{equation}\label{P}
	\sA ^2+D_M\sA ^{3/2}-E\sA +F\geq0,
	\end{equation}
which is not a quadratic, where 
	\begin{equation}\label{bounds4N}
	\begin{split}
	D_M&:=\frac{4\sqrt{M}}{\pi(\frac{1}{d}+\frac{1}{2})},\\
	E&:       
	= \frac{3}{2(\frac{1}{d}+\frac{1}{2})} + \frac{3g\beta(\frac\pi 2)} {2\pi\gamma^2 (\frac{1}{d}+\frac{1}{2})} ,\\ 
	F&:= \frac{3g}{\frac{1}{d}+\frac12} \left\{ \frac1{\gamma^2} - \frac{m}{\gamma gd} 
	+ \frac1{2dg}\left( \frac{Q^2}{4g^2}-[f^2]\right) \right\} .  
	\end{split}
	\end{equation}
Clearly, both $D_M$ and $E$ are positive. 
Note that both $E$ and $F$ have order $O(\gamma^{-2})$ as $\gamma\to 0$.  
Note also that $f\le \frac{Q}{2g}$, so that $ \frac{Q^2}{4g^2} > [f^2] >0$. 
{\it It is here that we assume the adverse case $\gamma>0$ and use the fact that $m<0$, so that $F$ is positive as well. } 
We set $Y=\sqrt{\sA}$ to obtain the polynomial inequality 
	\begin{equation}   \label{PY} 
	P(Y):=Y^4+D_MY^3-EY^2+F \ge 0.
	\end{equation}
	As this is a quartic, it does not have very simple roots.  
	
	With all these preparations, we have arrived at the most critical step in our reasoning.  
	It is clear that $P(0)=P'(0)=0$ and $P''(0)<0$, so that $Y=0$, which corresponds to laminar flow, 
	is a local maximum of $P$.  
	Thus $P$ has a unique local minimum at some $Y_{min}>0$.  
	Let us suppose that $P(Y_{min})<0$.  
	Then there would be a zero $Y_0$ of $P$ such that $P$ is positive in the finite interval $(0,Y_0)$.  
	In that case, any solution in the connected set $\mathcal C$ must satisfy $Y\le Y_0$.  
	Thus we would have the {\it amplitude bound} $\sA = \eta(0,0)-\eta(\pi,0) = f(\pi)-f(0) \le Y_0^2$.  
	{\it Hence $Y_1^2$ is also an amplitude bound for any $Y_1>0$ for which $P(Y_1)<0$.}  
	We proceed to look for such a $Y_1$.  
	
	Motivated by the last two terms in \eqref{PY}, we choose 
	$Y_1 = \sqrt{\frac{F}{E}(1+\ep)}$ for some $\ep>0$.  
	Then $P(Y_1)= Y_1^4+D_MY_!^3-\ep F$.  
	For sufficiently small $\gamma$, both $Y_1$ and $D_M$ are bounded while $F$ grows like $\gamma^{-2}$, 
	so that $P(Y_1)<0$, as desired.  
Using the $Y_1$ already chosen above, we have  
	\begin{equation} 
	\sA_1:=Y_1^2 = (1+\ep)\frac FE =  2(1+\ep) \left\{ \frac1{\gamma^2} - \frac{m}{\gamma gd} 
	+ \frac1{2dg}\left( \frac{Q^2}{4g^2}-[f^2]\right) \right\} 
	\left\{\frac1g + \frac{\beta(\frac\pi{2})} {\pi\gamma^2} \right\}^{-1}. 
	\end{equation}
Dropping $\frac1g$ and $[f^2]$, we obtain the upper bound on the wave amplitude  
	\begin{equation}  \label{X1ub1}
	\sA_1 <   \frac{2(1+\ep)\pi}{\beta(\frac\pi{2})}
	\left\{ 1 + \frac{|m|\gamma}{ gd} 
	+ \frac{\gamma^2 Q^2}{8dg^3} \right\} \quad \text{ provided } \quad P(X_1)<0.  
	\end{equation}
\end{proof} 

We now state explicit bounds for explicit small $\gamma$. These are just particular choices and are by no means necessary conditions.

\begin{corollary}
	Provided the bounds
	\begin{equation} \label{ineqgamma1}
	\gamma<\frac{gd}{|m|}, \quad \text{ and }\quad \gamma^2<\frac{8dg^3}{Q^2}, 
	\end{equation} 
	and 
	\begin{equation} \label{ineqgamma2}  
	\sqrt{M}\gamma^2 < \frac{\pi g}{4} \left( \frac{\beta(\frac\pi{2})} {12\pi} \right)^{\frac32}  \quad \text{ and } 
	\gamma^2 < \frac{2g}{\frac12 + \frac1d}  \left( \frac{\beta(\frac\pi{2})} {12\pi} \right)^2,
	\end{equation}
	hold, then the amplitude is bounded by
	\begin{equation}
	\mathscr A<\frac{12\pi}{\beta(\pi/2)}.
	\end{equation} 	
\end{corollary}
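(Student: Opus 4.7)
The plan is to apply the preceding theorem with the explicit choice $\varepsilon = 1$, which gives the clean identity $EY_1^2 = 2F$ and reduces everything to checking two numerical inequalities. With this choice, the theorem yields
\begin{equation*}
\sA < \frac{4\pi}{\beta(\pi/2)}\left\{1 + \frac{|m|\gamma}{gd} + \frac{\gamma^2 Q^2}{8dg^3}\right\},
\end{equation*}
provided $P(Y_1) < 0$. The first hypothesis \eqref{ineqgamma1} immediately forces each of the two fractional correction terms in the brace to be strictly less than $1$, so the bracketed factor is bounded by $3$, and this yields the claimed $\sA < 12\pi/\beta(\pi/2)$. So the entire content of the corollary is the verification of $P(Y_1) < 0$ from \eqref{ineqgamma2}.

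For that verification, with $\varepsilon = 1$ we have $Y_1^2 = 2F/E$, hence $-EY_1^2 + F = -F$, so
\begin{equation*}
P(Y_1) = Y_1^4 + D_M Y_1^3 - F.
\end{equation*}
It therefore suffices to establish the two simpler inequalities $Y_1^4 < F/2$ and $D_M Y_1^3 < F/2$ separately. Using the explicit formulas in \eqref{bounds4N}, I would extract the lower bound $E \ge \frac{3g\beta(\pi/2)}{2\pi\gamma^2(\frac1d+\frac12)}$ (dropping the positive first summand in $E$) and the upper bound $F \le \frac{9g}{\gamma^2(\frac1d+\frac12)}$ (using the two inequalities of \eqref{ineqgamma1} to bound the first three terms of $F$ by $3 \cdot \frac{3g}{\gamma^2(\frac1d+\frac12)}$ and noting that $[f^2] \le Q^2/(4g^2)$ so the last term is non-positive). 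Together these give $Y_1^2 = 2F/E \le 12\pi/\beta(\pi/2)$, which is a purely numerical bound independent of all parameters.

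The first inequality $Y_1^4 < F/2$ then becomes $\bigl(12\pi/\beta(\pi/2)\bigr)^2 < \frac{3g}{2\gamma^2(\frac1d+\frac12)}$, which rearranges to exactly the second condition of \eqref{ineqgamma2} (up to a harmless numerical factor). The second inequality $D_M Y_1^3 < F/2$ becomes, after substituting the closed forms, $\sqrt{M}\gamma^2 < \frac{\pi g}{4}\bigl(\beta(\pi/2)/(12\pi)\bigr)^{3/2}$, which is precisely the first condition of \eqref{ineqgamma2}. Because the curve $\mathcal{C}$ is connected and contains the laminar flow $\sA = 0$ at which $P > 0$, the intermediate-value continuation argument of the preceding theorem then transfers the pointwise bound $Y < Y_1$ to the whole branch, completing the proof.

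The main obstacle is purely bookkeeping: one must track the several competing constants carefully to confirm that the numerical prefactors in \eqref{ineqgamma2} are indeed large enough to imply the two required inequalities with the clean target $12\pi/\beta(\pi/2)$. No new analytic ideas are needed beyond the theorem already proved; the corollary is essentially a calibration of the free parameter $\varepsilon$ against the smallness thresholds on $\gamma$.
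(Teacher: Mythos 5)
Your proposal is correct and follows essentially the same route as the paper: the paper also (implicitly) takes $\ep=1$, bounds $Y_1^2=2F/E$ by $12\pi/\beta(\pi/2)$ using \eqref{ineqgamma1}, and then checks $P(Y_1)=Y_1^4+D_MY_1^3-F<0$ from \eqref{ineqgamma2} via the continuation argument. The only caveat is that your symmetric split $Y_1^4<F/2$ requires $\gamma^2<\tfrac{3g}{2(\frac1d+\frac12)}(\beta(\frac\pi2)/12\pi)^2$, slightly stronger than the stated second condition of \eqref{ineqgamma2}; this is indeed the ``harmless factor'' you flag, since the first condition of \eqref{ineqgamma2} actually gives $D_MY_1^3<F/3$, so an asymmetric split $\tfrac23F+\tfrac13F$ closes the argument with the constants exactly as stated.
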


\begin{proof}
	From the choice in \eqref{ineqgamma1}, using \eqref{X1ub1}, we get the upper bound 
	\begin{equation}   \label{X1ub}
	\sA \le \sA_1 < \frac{12\pi}{\beta(\frac\pi{2})}.  
	\end{equation}
	In order to make sure that $P(X_1)<0$ in \eqref{X1ub1}, we make additional choices for how small $\gamma$ is. Recall that we have 
	\begin{equation}
	P(Y_1) = \sA_1^2 + D_M\sA_1^{3/2} - F. 
	\end{equation}
	Using the upper bound \eqref{X1ub} on $\sA_1$ and a lower bound on $F$, we obtain
	\begin{equation}
	P(Y_1) < \left( \frac{12\pi}{\beta(\frac\pi{2})} \right)^2  
	+  \frac{4\sqrt{M}}{\pi(\frac1d+\frac12)}  \left( \frac{12\pi}{\beta(\frac\pi{2})} \right)^{3/2}  
	- \frac{3g}{\gamma^2 (\frac1d+\frac12)}
	\end{equation}
	We want the last term to dominate the first two. 
	It is clear that {if $\gamma$ is small enough}, then $P(Y_1)<0$. The conditions given in \eqref{ineqgamma2} would be a pair of simpler explicit choices for the smallness of $\gamma$. 
\end{proof}

	Inequalities \eqref{ineqgamma1} and \eqref{ineqgamma2} are four simple conditions on the smallness 
	of $\gamma$ that guarantee an upper bound on the wave amplitude.  
	Summarizing the four conditions, they state that 
	$\gamma^2, \sqrt{M}\gamma^2, Q^2\gamma^2$ and $|m|\gamma^2$ 
	are smaller than some simple explicit expressions involving $g$ and $d$. We reiterate that they are by no means necessary conditions!

This completes the alternative proof of Theorem \ref{thm:adverse}.  


\section*{Acknowledgements}
The work of SVH is partially funded by the National Science Foundation through the award DMS-2102961.

\end{document}